\newcommand{\N}{\mathbb N}
\newcommand{\gr}{\operatorname{gr}}
\newcommand{\AC}{AC}
\newcommand{\dvert}{d_{\text{v}}}
\newcommand{\dhoriz}{d_{\text{h}}}
\newcommand{\nhoriz}{N_{\text{h}}}
\newcommand{\eps}{\varepsilon}
\newcommand{\pgr}{\mathbf{PGR}}
\newtheorem{theorem}{Theorem}[section]
\newtheorem{lemma}[theorem]{Lemma}
\newtheorem{corollary}[theorem]{Corollary}
\newtheorem{prop}[theorem]{Proposition}
\newtheorem*{claim*}{Claim} 
\newtheorem{conj}[theorem]{Conjecture}
\theoremstyle{definition}
\newtheorem{definition}[theorem]{Definition}
\newtheorem{question}[theorem]{Question}
\newcommand{\ghaura}[1]{\textcolor{blue}{\textbf{[Ghaura: #1]}}}
\newcommand{\krishna}[1]{\textcolor{brown}{\textbf{[Krishna: #1]}}}
\newcommand{\josh}[1]{\textcolor{violet}{\textbf{[Josh: #1]}}}
\newcommand{\jasper}[1]{\textcolor{teal}{\textbf{[Jasper: #1]}}} 
\newcommand{\lp}[1]{\textcolor{purple}{\textbf{[Logan: #1]}}} 
\newcommand{\ra}[1]{\textcolor{orange}{\textbf{[RA:} #1\textbf{]}}}
\newcommand{\lps}[2]{%
 \renewcommand{\ULthickness}{0.8pt}
 \textcolor{purple}{\sout{\textcolor{black}{#1}}}\textcolor{purple}{#2}
}
\begin{document}
\title{Ramsey numbers of grid graphs}
\author{Xiaoyu He\thanks{School of Mathematics, Georgia Institute of Technology, Atlanta, GA 30332. Email: xhe399@gatech.edu.} \and Ghaura Mahabaduge\thanks{Department of Mathematics, Massachusetts Institute of Technology, Cambridge, MA 02139. Email: ghaura\_m@mit.edu.}  \and Krishna Pothapragada\thanks{Department of Mathematics, Massachusetts Institute of Technology, Cambridge, MA 02139. Email: potha601@mit.edu.} \and Josh Rooney\thanks{Department of Mathematics, Harvard University, Cambridge, MA 02138. Email: joshuarooney@college.harvard.edu} \and Jasper Seabold\thanks{School of Mathematics, Georgia Institute of Technology, Atlanta, GA 30332. jasper.seabold@gatech.edu}}
\maketitle

\begin{abstract} 
Let the grid graph $G_{M\times N}$ denote the Cartesian product $K_M \square K_N$. For a fixed subgraph $H$ of a grid, we study the off-diagonal Ramsey number $\gr(H, K_k)$, which is the smallest $N$ such that any red/blue edge coloring of $G_{N\times N}$ contains either a red copy of $H$ (a copy must preserve each edge's horizontal/vertical orientation), or a blue copy of $K_k$ contained inside a single row or column. Conlon, Fox, Mubayi, Suk, Verstra\"ete, and the first author recently showed that such grid Ramsey numbers are closely related to off-diagonal Ramsey numbers of bipartite $3$-uniform hypergraphs, and proved that $2^{\Omega(\log ^2 k)} \le \gr(G_{2\times 2}, K_k) \le 2^{O(k^{2/3}\log k)}$. We prove that the square $G_{2\times 2}$ is exceptional in this regard, by showing that $\gr(C,K_k) = k^{O_C(1)}$ for any cycle $C \ne G_{2\times 2}$. We also obtain that a larger class of grid subgraphs $H$ obtained via a recursive blowup procedure satisfies $\gr(H,K_k) = k^{O_H(1)}$. Finally, we show that conditional on the multicolor Erd\H{o}s-Hajnal conjecture, $\gr(H,K_k) = k^{O_H(1)}$ for any $H$ with two rows that does not contain $G_{2\times 2}$.
\end{abstract}

\section{Introduction}

\

If $G$ and $H$ are $t$-uniform hypergraphs (henceforth $t$-graphs), then the Ramsey number $r(G, H)$ the minimum $N$ such any $t$-graph on $N$ vertices either contains $G$ or contains $H$ in its complement. Hypergraph Ramsey theory is focused on estimating the asymptotic growth of such functions, especially when one or both of $G, H$ is a complete $t$-graph, denoted $K_n^{(t)}$.

Perhaps the central problem of hypergraph Ramsey theory is to determine the tower height of the diagonal Ramsey number $r\left(K_n^{(3)}, K_n^{(3)} \right)$, for which the best known bounds are
\[
2^{\Omega(n^2)} \le r\left(K_n^{(3)}, K_n^{(3)} \right) \le 2^{2^{O(n)}},
\]
due to Erd\H{o}s, Hajnal, and Rado \cite{rkn3kn3}.
As a step towards this problem, Fox and the first author~\cite{Fox_2021} initiated the study of the Ramsey number $R( K_n^{(3)},S_k^{(3)} )$, where the $3$-uniform star $S_k^{(3)}$ is the $3$-graph with $k+1$ vertices, one of which is a distinguished ``center" vertex, and the $\binom{k}{2}$ hyperedges containing the center vertex. Roughly speaking, the star acts as a $3$-uniform model for the $2$-uniform complete graph, so this $R(K_n^{(3)},S_k^{(3)} )$ should be understood as a hybrid between the diagonal Ramsey numbers in uniformities $2$ and $3$. They settled the cases when $k$ is fixed in size and $n\rightarrow \infty$, and when both $k,n\rightarrow \infty$ together. Then, Conlon, Fox, Mubayi, Suk, Verstra\"ete and the first author \cite{conlon2023hypergraph} settled the final case when $n$ is fixed and $k\rightarrow \infty$ by reducing to Ramsey-type problems about grid graphs, which we believe to be of independent interest. 


We define a grid graph on $c$ columns and $r$ rows, denoted $G_{c \times r}$, to be the Cartesian product $K_c \square K_r$, and a \emph{spanning grid subgraph} on $c$ columns and $r$ rows to be a spanning subgraph of $G_{c \times r}$. Equivalently, a spanning grid subgraph is a graph on the vertices $\{(x, y)|x\in [c],y\in [r]\}$ where all edges are between vertices in the same row or the same column. That is, all edges are in the set $E^* = \{\{(x, y), (x', y)\}|x, x'\in [c],y\in [r]\} \cup \{\{(x, y),(x, y')\}|x\in [c],y, y'\in [r]\}$. The complement of a spanning grid subgraph $H \subseteq G_{c \times r}$, denoted $\overline H$,
is taken with respect to the grid graph $G_{c \times r}$ (i.e. $E(\overline H) = E^* \setminus E(H)$). A \emph{grid graph} on $c$ columns and $r$ rows is a graph on a subset of the vertices $[c] \times [r]$ where all edges belong to $E^*$ as above. Thus, a grid subgraph is any (not necessarily spanning) subgraph of a grid graph, together with the data of which sets of vertices lie in the same row or column.

For any grid subgraph $H \subseteq G_{c \times r}$, the \emph{(off-diagonal) grid Ramsey number} $\gr(H,K_k)$ is the least $N$ such that for any spanning grid subgraph $G$ of $G_{N\times N}$, either $G$ contains $H$ or its complement contains a clique of size $k$. Note that any complete graph in $G_{N\times N}$ must lie in a single row or column, so the goal is to avoid horizontal and vertical copies of $K_k$ in $\overline H$.


We study the asymptotics of $\gr(H, K_k)$ where $H$ is a fixed grid subgraph and $k \rightarrow \infty$.
In \cite{conlon2023hypergraph}, the case $H = G_{2\times 2}$ (henceforth called the square) was studied to bound $r( K_4^{(3)}, S_k^{(3)})$. They found that 
\[
2^{\Omega((\log k)^2)}\le \gr(G_{2 \times 2}, K_k) \le 2^{O(k^{2/3}\log k)}, 
\]
suggesting that $\gr(H, K_k)$ may have exotic superpolynomial growth rates in general. In this paper, we show that this example is unusual and that $R(H, K_k)$ grows polynomially in $k$ for a large family of $H$ that are square-free. In fact, it is possible that the square $G_{2\times 2}$ is the unique exception in this problem. We begin with a concrete statement about cycles.

\begin{definition}[Simple grid subgraph]
  A grid subgraph $H$ on $c$ columns and $r$ rows is \textit{simple} if its intersection with any row or column is connected. 
\end{definition}

Thus, a \textit{simple cycle} is a cycle whose embedding into the grid is simple, and a \textit{simple tree} is a tree whose embedding into the grid is simple. A typical example of a simple grid subgraph is the alternating cycle $AC_{2t}$, whose edges alternate in orientation, see \cref{fig:AC_8} below.

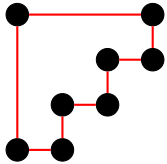
\begin{figure}[h!]
 \centering
  \begin{tikzpicture}[scale=0.6, mystyle/.style={circle, draw, fill=black, inner sep=3pt}]
  \node[mystyle] (A) at (1,1) {};
  \node[mystyle] (B) at (2,1) {};
  \node[mystyle] (C) at (2,2) {};
  \node[mystyle] (D) at (3,2) {};
  \node[mystyle] (E) at (3,3) {};
  \node[mystyle] (F) at (4,3) {};
  \node[mystyle] (G) at (4,4) {};
  \node[mystyle] (H) at (1,4) {};

  \draw[red,thick] (A) -- (B);
  \draw[red,thick] (C) -- (B);
  \draw[red,thick] (C) -- (D);
  \draw[red,thick] (E) -- (D);
  \draw[red,thick] (E) -- (F);
  \draw[red,thick] (G) -- (F);
  \draw[red,thick] (G) -- (H);
  \draw[red,thick] (A) -- (H);

  \end{tikzpicture}
  \caption{The alternating cycle $AC_{8}$ of length $8$.}
  \label{fig:AC_8}
 \end{figure}

We show that all simple cycles except the square exhibit polynomial growth rate for $\gr(H, K_k)$. It will be convenient to define $\pgr$ to be the class of grid subgraphs $H$ satisfying $\gr(H, K_k) = k^{O_H(1)}$.

\begin{theorem}\label{thm: Simple Cycles are polynomial}
  For any simple cycle $H$ of length at least 5, $H \in \pgr$.
\end{theorem}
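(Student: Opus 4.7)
My plan is to find a red copy of $H$ directly in $G_{N \times N}$ for $N = k^{C(\ell)}$ via a greedy embedding, exploiting a structural distinction between $G_{2\times 2}$ and all longer simple cycles. Every simple cycle $H$ of length $\ell \geq 5$ falls into one of two cases: (a) $H$ contains a horizontal or vertical run of length at least $2$, or (b) $H = AC_{2t}$ for some $t \geq 3$ (an alternating cycle on at least $3$ rows and $3$ columns). The square $G_{2\times 2}$ is excluded precisely because neither case applies to it.

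\textbf{Setup.} Assuming no row or column of $G_{N \times N}$ contains a blue $K_k$, Tur\'{a}n's theorem gives at least $\binom{N}{2}/(k-1)$ red edges in each row and each column. A standard averaging then yields a set of \emph{good} vertices --- those with red degree $\Omega(N/k)$ in both their row and their column --- which forms a constant fraction of $[N]\times[N]$. The greedy embedding operates within this set.

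\textbf{Greedy embedding and closing step.} Traverse $H$ cyclically as $v_1 v_2 \cdots v_\ell v_1$ and embed $\varphi(v_1),\ldots,\varphi(v_{\ell-1})$ one at a time, at each step selecting a good vertex red-connected to the previously placed vertex in the correct row or column. Each step offers $\Omega(N/k)$ choices, giving $\Omega((N/k)^{\ell-1})$ partial embeddings. I then reserve the most flexible element for the cycle-closing step: in case (a), the middle vertex $v^*$ of a length-$\geq 2$ run, whose two cycle-neighbors share a row (or column) with $v^*$; in case (b), the row index $y_t$ of the last row used by $AC_{2t}$, together with the two vertices of $H$ lying in that row. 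In case (a), once the remaining vertices are embedded, $v^*$ ranges over the common red neighborhood of its two cycle-neighbors in the relevant row/column, which has size $\Omega(N/k^2)$ on average, giving valid choices for $N = k^{O(1)}$. In case (b), the free parameter $y_t$ must satisfy three red-edge conditions (the two vertical edges entering and leaving $y_t$ and the horizontal edge inside $y_t$), each of which holds for an $\Omega(1/k)$-fraction of $y_t$ individually; by dependent random choice on the previously fixed columns $x_1, x_t$, the three conditions hold simultaneously for $\Omega(N/k^3)$ values of $y_t$, positive for $N = k^{O(1)}$.

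\textbf{Main obstacle.} The delicate step is case (b), where three red-edge conditions depending on a single free parameter $y_t$ must simultaneously hold. To make them behave approximately independently, I would pre-select the columns $x_1, x_t$ from a dense subset on which the column-pair red co-degree is ``regular'' across rows; this preprocessing is standard but must be interleaved with the greedy argument. The reason this approach fails for $G_{2\times 2}$ is that neither case applies: the square uses only two rows and two columns, and after fixing any three of its four vertices the fourth is uniquely determined with no free parameter left for averaging. This is exactly why the problem for $G_{2\times 2}$ reduces to a genuinely $3$-uniform hypergraph Ramsey question as in~\cite{conlon2023hypergraph}, with superpolynomial behavior.
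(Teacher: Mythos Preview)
There is a genuine gap in the argument, stemming from a miscounting of degrees of freedom. An embedding of a grid subgraph is a pair of injections $(\varphi_c,\varphi_r)$, so it has only $r+c$ free parameters --- and for a simple cycle one checks $r+c=\ell$ exactly. Your claim that ``each step offers $\Omega(N/k)$ choices, giving $\Omega((N/k)^{\ell-1})$ partial embeddings'' therefore cannot hold whenever the greedy walk reaches a vertex whose row \emph{and} column have both already been fixed by earlier steps: at such a forced step the image is determined and you have no choice at all.

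In case (a) this happens precisely when the path $v_1,\dots,v_{\ell-1}$ re-enters the starting row $r_0$. At that moment the column was just fixed by the preceding vertical edge and $r_0$ was fixed at step $1$, so the required edge may simply be blue. Concretely, for the $5$-cycle $(1,1)(2,1)(3,1)(3,2)(1,2)$ with $v_5=(2,1)$ reserved, step $4$ places $v_4=(1,1)$ at the already-determined point $(\varphi_c(1),\varphi_r(1))$, and nothing you have done controls whether the vertical edge from $\varphi(v_3)$ to this point is red. This is a closing condition you have not accounted for, separate from the one you reserve $v^*$ to handle.

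In case (b) there are no forced steps before $v_\ell$, but the closing problem is real: the horizontal constraint in row $y_t$ asks that the fixed column pair $(\varphi_c(1),\varphi_c(t))$ be red-connected in row $y_t$, and no hypothesis prevents this particular pair from having \emph{zero} such rows. Your greedy walk fixes $\varphi_c(1)$ at step $1$ and $\varphi_c(t)$ at step $2t-2$, giving no joint control over them; ``dependent random choice on the previously fixed columns'' is not a proof here, and making it one would amount to proving a supersaturation estimate for the path $v_1,\dots,v_{2t-2}$ weighted by this co-degree.

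The paper avoids all of this by never trying to close two constraints at once. It proves a supersaturation step (``bridging''): if $G$ has $\Omega(N^{r+c}k^{-\beta})$ copies of $H$, then duplicating one column of $H$ and adding a single bridge edge gives $\Omega(N^{r+c+1}k^{-\beta'})$ copies of the resulting $H'$, via Tur\'an on the extension set plus Jensen's inequality. Iterating from a single vertex builds $AC_6$, then all longer $AC_{2t}$, then all simple cycles of length $\ge 5$ by subdividing edges --- maintaining a polynomial embedding count throughout. This inductive counting is exactly the missing ingredient that your greedy-plus-close outline gestures at but does not supply.
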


As in the previous works, our results have direct implications for Ramsey numbers of $3$-uniform hypergraphs. Let $C_t^{(3)}$ denote the $3$-uniform tight cycle on $t$ vertices (see \cref{fig:tight-cycle-8}). Conlon, Fox, Gunby, Mubayi, Suk, Verstr\"aete, and the first author and showed in \cite{conlon2025off} that the off-diagonal Ramsey number of tight-cycles versus the complete tripartite 3-graph exhibits super polynomial growth: $R(C_3^{(t)}, K_{n, n, n}^{(3)}) \geq 2^{\Omega_H(n \log n)}$ and this is tight. We show that replacing the complete tripartite 3-graph with the star instead gives a polynomial upper bound.

\begin{figure}[H]
\centering
\begin{tikzpicture}[scale=1,
  mystyle/.style={circle, draw, fill=black, inner sep=3pt},
  hedgeAfill/.style={fill=blue!20, opacity=0.3},
  hedgeAout/.style={draw=blue!70, thick},
  hedgeBfill/.style={fill=red!20, opacity=0.3},
  hedgeBout/.style={draw=red!70, thick},
  hedgeCfill/.style={fill=green!20, opacity=0.3},
  hedgeCout/.style={draw=green!70, thick},
  hedgeDfill/.style={fill=black!20, opacity=0.3},
  hedgeDout/.style={draw=black!70, thick}
]

\node[mystyle] (A) at ( 2.0,  0.0) {};
\node[mystyle] (B) at ( 1.4,  1.4) {};
\node[mystyle] (C) at ( 0.0,  2.0) {};
\node[mystyle] (D) at (-1.4,  1.4) {};
\node[mystyle] (E) at (-2.0,  0.0) {};
\node[mystyle] (F) at (-1.4, -1.4) {};
\node[mystyle] (G) at ( 0.0, -2.0) {};
\node[mystyle] (H) at ( 1.4, -1.4) {};

\newcommand{\hcoords}[3]{%
  ($(#1)!1.25!(#2)$)
  ($(#2)!1.25!(#3)$)
  ($(#3)!1.25!(#1)$)
}

\newcommand{\hedge}[5]{
  \fill[#1]  plot [smooth cycle, tension=0.6] coordinates { \hcoords{#3}{#4}{#5} };
  \draw[#2]  plot [smooth cycle, tension=0.6] coordinates { \hcoords{#3}{#4}{#5} };
}

\hedge{hedgeAfill}{hedgeAout}{A}{B}{C}
\hedge{hedgeBfill}{hedgeBout}{B}{C}{D}
\hedge{hedgeCfill}{hedgeCout}{C}{D}{E}
\hedge{hedgeDfill}{hedgeDout}{D}{E}{F}
\hedge{hedgeAfill}{hedgeAout}{E}{F}{G}
\hedge{hedgeBfill}{hedgeBout}{F}{G}{H}
\hedge{hedgeCfill}{hedgeCout}{G}{H}{A}
\hedge{hedgeDfill}{hedgeDout}{H}{A}{B}

\end{tikzpicture}
\caption{The tight cycle $C_8^{(3)}$ on 8 vertices.}
\label{fig:tight-cycle-8}
\end{figure}

\begin{corollary}\label{cor:tight cycle theorem}
    For all $t\geq 5$, we have $R(C_t^{(3)},S_k^{(3)}) = k^{O_t(1)}.$ 
\end{corollary}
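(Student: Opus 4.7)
The plan is to reduce $R(C_t^{(3)}, S_k^{(3)})$ to a grid Ramsey number via the canonical encoding of a $3$-graph as a grid subgraph. Given a $3$-graph $\mathcal{H}$ on $[N]$, define a spanning subgraph $G$ of $G_{N \times N}$ whose row edge $\{(a,y),(b,y)\}$ is present iff the triple $\{a,b,y\}$ is a hyperedge of $\mathcal{H}$, and symmetrically for column edges. Under this encoding, the row-$y$ subgraph of $G$ is exactly the link graph $L_y$ of $\mathcal{H}$, so a $K_k$ in $\overline{G}$ confined to a single row or column is the same as an independent set of size $k$ in some $L_v$, i.e., a copy of $S_k^{(3)}$ in $\mathcal{H}^c$. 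In particular, absence of the blue star is equivalent to $\overline{G}$ containing no $K_k$ in any row or column.

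Next, encode a copy of $C_t^{(3)}$ by assigning its cyclic vertices $v_1, \ldots, v_t$ types in $\{U, V\}$ and interpreting each hyperedge $\{v_i, v_{i+1}, v_{i+2}\}$ as a grid edge: if the type split is UUV, read it as a row edge indexed by the $V$-vertex, and if UVV, as a column edge indexed by the $U$-vertex. Two consecutive hyperedges share a grid vertex iff the shared pair $\{v_{i+1}, v_{i+2}\}$ is mixed (one $U$, one $V$), which is globally consistent around the cycle iff $t$ is even.

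For even $t \ge 6$, the alternating coloring $UVUV\cdots UV$ gives a simple grid cycle that is exactly $AC_t$, and a direct check (generalizing the $AC_4 = G_{2\times 2} \leftrightarrow C_4^{(3)} = K_4^{(3)}$ case of \cite{conlon2023hypergraph}) shows $AC_t \subseteq G$ iff $C_t^{(3)} \subseteq \mathcal{H}$ on the $t$ labels visited, in the cyclic order $x_1, y_1, x_2, y_2, \ldots, x_{t/2}, y_{t/2}$. Applying \cref{thm: Simple Cycles are polynomial} to $AC_t$ then gives $R(C_t^{(3)}, S_k^{(3)}) \le \gr(AC_t, K_k) = k^{O_t(1)}$. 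For odd $t \ge 5$, the alternation forces exactly one non-mixed shared pair $\{v_t, v_1\}$, and the grid image becomes a tree $T_t$ on $t+1$ vertices (a path that traverses the appropriate rows and columns). A direct tabulation shows the $t$ hyperedges read off from $T_t$ are precisely those of $C_t^{(3)}$, so $T_t \subseteq G$ still forces $C_t^{(3)} \subseteq \mathcal{H}$.

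The main obstacle is showing $T_t \in \pgr$ for odd $t$: two of $T_t$'s columns each contain two vertices that are not joined by an edge of $T_t$, so $T_t$ is not simple and \cref{thm: Simple Cycles are polynomial} does not apply directly. One must instead use a direct tree-embedding argument (since $T_t$ is a small tree and each row and column of $G$ has independence number less than $k$, hence contains $\Omega(N^2/k)$ edges, a standard greedy embedding succeeds for $N$ polynomial in $k$), or appeal to the recursive blow-up extension of $\pgr$ outlined in the abstract. Either route yields $\gr(T_t, K_k) = k^{O_t(1)}$, completing the corollary for all $t \ge 5$.
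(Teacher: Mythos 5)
Your reduction from $R(C_t^{(3)},S_k^{(3)})$ to a grid Ramsey problem is exactly the paper's, and your treatment of even $t$ (the image is $AC_t$, apply the alternating-cycle/simple-cycle result) matches the paper's proof. The odd case, however, has a genuine gap. You correctly identify that the grid image of $C_{2d+1}^{(3)}$ is a path on $t+1$ vertices (the paper's ``aligned staircase'' $AS_{d-1}$) with two columns each containing a non-adjacent pair of vertices, so the simple-tree machinery does not apply. But your proposed fix --- ``a standard greedy embedding succeeds'' because each row/column has $\Omega(N^2/k)$ edges --- does not work precisely because of those alignment constraints. A grid embedding must send both vertices of column $x_1$ (and likewise $x_{d+1}$) to the \emph{same} column of $G$. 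Once you have greedily embedded the staircase portion, the final vertex must land in a prescribed row \emph{and} a prescribed (already-used) column while being adjacent to a specific already-placed vertex; this demands the presence of one particular edge of $G$, which no degree or density condition on individual rows guarantees. Greedy embeddings handle trees only when each new vertex has a free coordinate, which is exactly what simplicity provides and what $AS_{d-1}$ lacks.

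Your second suggested route --- the ``recursive blow-up extension'' --- is the right idea and is what the paper actually does, but you leave it unexecuted. The paper shows that $AS_{d-1}$ is contained in a graph obtained by $d-1$ row-bridging steps starting from a horizontal $(d+1)$-clique (itself bridging-constructible from a single vertex), and then invokes \cref{thm: Constructible by Duplication is Polynomial}; the supersaturation-plus-Tur\'an mechanism inside \cref{Cor: col/row Dup GridGraph} is precisely what forces the alignment that greedy embedding cannot. To complete your proof you would need to exhibit this bridging construction (or an equivalent supersaturation argument) explicitly rather than assert that ``either route yields'' the bound.
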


To prove \cref{thm: Simple Cycles are polynomial}, we show that $\pgr$ is preserved by an operation we call bridging, which is defined by duplicating a row or column and adding a single edge between the two duplicates (see \cref{def:grid-dup} for the formal statement). We see that this operation is closely related to blowups at a vertex. Conlon, Fox, Gunby, Mubayi, Suk, Verstr\"aete, Yu, and the first author conjectured in \cite{conlon2025offdiagonalhypergraphramseynumbers} that $R(H, K_k^{(3)})$ is polynomial in $k$ if and only if $H$ is a subgraph of an iterated blowup of an edge, with the ``if" direction already known due to \cite{erdos1972ramsey}. They offer some evidence of the forward implication by showing that if $R(H, K_{k}^{(3)})$ is superpolynomial for $H$ a tightly connected 3-graph which is not bipartite. This suggests that blowup-type operations may be an enlightening framework under which to further study the polynomial to exponential transition of off-diagonal 3-uniform Ramsey numbers. Toward our result, it turns out that all simple cycles can be obtained from a single-edge graph by iterated bridging.

In the case that $H$ is a simple tree, we show that the corresponding Ramsey growth rate is linear.

\begin{theorem}\label{thm:tree linear}
For any simple tree $T$, $\gr(T,K_k)=O_T(k)$.
\end{theorem}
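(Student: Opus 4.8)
The plan is to reduce \cref{thm:tree linear} to the following embedding statement: there is a constant $C=C_T$ so that if $N\ge Ck$ and $G$ is a red/blue coloring of $G_{N\times N}$ in which no row or column contains a blue $K_k$, then $G$ contains a red copy of $T$. The no-blue-$K_k$ hypothesis is exactly the statement that for every row $i$ the red graph $R_i$ induced on the $N$ columns has independence number at most $k-1$, and likewise the red graph $C_j$ induced on the $N$ rows by each column $j$ has $\alpha(C_j)\le k-1$. So the content is: a bounded-size simple tree must appear inside a grid all of whose ``line graphs'' have bounded independence number, once the side length is linear in $k$. This is a grid analogue of Chv\'atal's theorem $r(T,K_k)=(|V(T)|-1)(k-1)+1$, and the mechanism behind Chv\'atal's theorem --- a graph with independence number $<k$ on $\gg k$ vertices contains a subgraph of large minimum degree, into which every bounded tree embeds greedily --- is what I would adapt.

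First I would collect the structural facts about a graph $H$ on $n$ vertices with $\alpha(H)\le k-1$: by Tur\'an applied to $\overline H$ it has at least $\tfrac n2\!\left(\tfrac n{k-1}-1\right)$ edges; for any threshold $d$, fewer than $2kd$ vertices of $H$ have degree below $d$ (the sub-threshold vertices induce a subgraph of maximum degree $<d$ and independence number $\le k-1$); and $H$ has a subgraph of minimum degree at least half its average degree. With $d$ a constant depending only on $|V(T)|$, this says that in every row and every column of the grid, all but $O_T(k)$ of the positions are \emph{good}, i.e.\ have red-degree at least $d$ along that line; and moreover, by averaging, all but $O_T(k)$ of the lines are \emph{robust}, meaning good in all but $O_T(k)$ of the perpendicular lines.

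Next, the embedding, which exploits the recursive structure of simple trees: by repeatedly deleting leaves, $T$ is built from a single vertex by a sequence of \emph{pendant-path extensions} --- at each step one attaches to a vertex $w$ already placed a path of new vertices lying entirely within a single grid-line (extending $w$'s row-class or $w$'s column-class by a path at one end). I would embed $T$ by processing these extensions in order, maintaining a red copy of the current subtree in which every vertex that still has an extension to come sits at a good position along every line in which it will later be extended. Carrying out one extension of length $p\le|V(T)|$ from such a good vertex $w$ along a line amounts to finding a red path of length $p$ in that line's red graph that starts at $w$, avoids the $\le|V(T)|$ already-used coordinates, and stays on good positions so the invariant survives; the density and min-degree facts above make this possible once $C$ is a sufficiently large constant depending on $|V(T)|$.

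The main obstacle is the coupling between rows and columns: a vertex of $T$ that lies in a nontrivial row-class \emph{and} a nontrivial column-class must be placed at a position that is good simultaneously in its row and its column, while its row and its column are partly forced by earlier choices. Controlling this is where I expect to spend the most effort; the idea is to use the two sources of slack we have --- (i) that the bad positions of each line and the non-robust lines are only $O_T(k)$ in number, so they can be steered around, and (ii) that each new pendant path introduces a fresh grid-line, which can therefore be chosen robust. Since $T$ has boundedly many vertices there are only boundedly many such choices, each feasible once $N$ exceeds a constant multiple of $k$, and --- crucially --- the cost is a single factor $O_T(k)$ in the side length rather than one factor per step, which is what produces the linear bound rather than a merely polynomial one (contrast \cref{thm: Simple Cycles are polynomial}). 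Assembling the choices along the pendant-path decomposition yields the red copy of $T$, giving $\gr(T,K_k)=O_T(k)$.
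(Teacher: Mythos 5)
Your reduction to the embedding statement and your Tur\'an facts are fine, and you have correctly located the crux, but the two ``sources of slack'' you offer do not close it, so the argument as proposed does not go through. The quantitative obstruction is this: any degree threshold $d$ for which ``all but $O_T(k)$ positions of a line are good along that line'' must satisfy $kd=O_T(k)$, i.e.\ $d=O_T(1)$ (the vertices of degree $<d$ induce a subgraph of maximum degree $<d$ and independence number $\le k-1$, hence number at most $(k-1)d$, and this is essentially tight). So at every greedy step after the first, you have only $O_T(1)$ candidate positions --- the red neighbors of the attachment vertex along the current line, minus used coordinates --- while the sets you must avoid to preserve your invariant (positions that are bad along their perpendicular line, indices of non-robust lines) have size $\Omega_T(k)$. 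Nothing prevents all $O_T(1)$ candidates from being forbidden. In particular slack (ii) fails because the ``fresh'' perpendicular line through a newly placed vertex is determined by which of the $O_T(1)$ red neighbors you pick, so it cannot be steered into the complement of $\Omega_T(k)$ non-robust lines; and slack (i) fails because steering around $\Omega_T(k)$ bad positions requires $\Omega_T(k)$ options. Only the root of the embedding, with its $N^2-O_T(Nk)$ choices, can be guaranteed good in both directions; every subsequent vertex of $T$ carrying a future extension requirement is placed with $O_T(1)$ freedom. A staircase path on five vertices already has two such ``corner'' vertices, so the greedy breaks even there.

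The paper's proof handles exactly this by replacing your pointwise invariant with a counting one: it shows by induction on $|T|$ (splitting $T$ at an edge incident to the root into $T_1$ and $T_2$) that for any constant $c$, all but $O_T(Nk)$ positions are ``$c$-diverse,'' i.e.\ support $c$ copies of $T$ rooted there that pairwise share no rows or columns other than those of the root. Two ideas there are missing from your outline. First, the degree condition is imposed on the \emph{induced subgraph on the already-good positions} of a line, so Tur\'an shows all but $O(k)$ positions per line have $\ge c$ neighbors that are themselves good --- rather than asking the $O_T(1)$ neighbors of a fixed good vertex to happen to be good. Second, when a copy of $T$ is assembled from copies of $T_1$ and $T_2$, each already-committed row or column destroys at most one of the $100c|T|$ disjoint copies available at the root, so $O_T(1)$ constraints eliminate only $O_T(1)$ of the many options; in your scheme $\Omega_T(k)$ constraints compete with $O_T(1)$ options. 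To salvage your outline you would need to redefine ``good for future extension'' recursively as ``supports many pairwise row/column-disjoint copies of the pending subtree,'' which is precisely the paper's diversity induction.
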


We next show a connection to the multicolor version of the celebrated Erd\H{o}s-Hajnal conjecture.

\begin{conj}[Multicolor Erd\H{o}s-Hajnal conjecture (see Fox, Grinspuch, Pach \cite{FOX201575})] \label{conj:meh}
     For every fixed $k, m\geq 2$ and $m$-coloring $\chi$ of $E(K_k)$, there exists $\varepsilon>0$ such that every coloring of the edges of $K_n$ contains either $k$ vertices whose edges are colored according to $\chi$ or $n^\varepsilon$ vertices whose edges are colored with at most $ m-1 $ colors. 
\end{conj}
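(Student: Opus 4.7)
The conjecture contains as its $m=2$ case the two-color Erd\H{o}s--Hajnal conjecture (take $\chi$ to be any $2$-coloring of $K_k$), which is one of the central open problems in Ramsey theory. Any genuine proof plan must therefore in particular resolve that problem, so what follows should be understood as a structural program---an induction on $m$ that reduces the multicolor statement to its bichromatic counterpart---rather than a self-contained argument.

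The induction is on $m$, with base case $m=2$ being the 2-color Erd\H{o}s--Hajnal conjecture, which I would assume. For the inductive step, fix $m \ge 3$, a pattern $\chi$ on $K_k$ using all $m$ colors (if $\chi$ misses a color, apply the inductive hypothesis directly to the given coloring), and an $m$-edge-coloring of $K_n$. Single out the graph $G_m$ spanned by color $m$ and apply 2-color Erd\H{o}s--Hajnal to $G_m$ with the forbidden induced subgraph $F$ dictated by how color $m$ appears inside $\chi$. One of two things happens: either $G_m$ already contains an induced copy of $F$, in which case the color-$m$ skeleton of $\chi$ is present and we proceed to embed the remainder; or $G_m$ admits a homogeneous subset $T$ of polynomial size. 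If $T$ is $G_m$-independent, then $T$ uses at most $m-1$ colors on $K_n$ and we are done. If $T$ is a $G_m$-clique, restrict to $T$ and iterate: delete all color-$m$ edges from $\chi$, invoke the inductive hypothesis on the resulting $(m-1)$-colored subpattern inside $T$, and then reattach the color-$m$ skeleton via a common-neighborhood argument inside the induced subcoloring.

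Careful bookkeeping of the exponents $\varepsilon_j$ produced at each level is required so that they do not degenerate under composition as $m$ grows; this is the kind of quantitative care already implicit in known multicolor variants of Erd\H{o}s--Hajnal-type theorems, and is typically handled by insisting on a polynomial (rather than merely superpolynomial) dependence of the homogeneous-set exponent on the forbidden induced subgraph $F$.

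The main obstacle is, unsurprisingly, the base case: the program above is in substance a reduction of the multicolor conjecture to the bichromatic one, and 2-color Erd\H{o}s--Hajnal is itself open except in the sparse families (paths and their cographs, the bull, graphs built from prime graphs by Chudnovsky and collaborators, etc.) where polynomial-exponent homogeneous sets are currently provable. Even conditional on the 2-color conjecture, a secondary obstacle is that the dichotomy step requires the quantitative form of Erd\H{o}s--Hajnal---polynomial control of $\varepsilon_0$ as a function of $F$---and not merely its asymptotic statement, since the exponents are composed across the $m-1$ iterative reductions. This is why the conjecture, as far as I can see, is unlikely to be provable without genuinely new ideas aimed at the original Erd\H{o}s--Hajnal problem itself.
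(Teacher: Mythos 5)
This statement is a \emph{conjecture} in the paper (it is the multicolor Erd\H{o}s--Hajnal conjecture of Fox, Grinshpun, and Pach), not a result the paper proves; the authors only use it as a hypothesis for their conditional results in Section 5. So there is no proof in the paper to compare against, and your own framing -- that any honest attempt must resolve the open two-color Erd\H{o}s--Hajnal conjecture -- is the right assessment of the situation. No proof should be expected here.

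That said, even as a conditional program your reduction has a genuine gap beyond the open base case. In the inductive step, the branch where $G_m$ contains an induced copy of $F$ (the color-$m$ skeleton of $\chi$) does not let you ``proceed to embed the remainder'': a single induced copy of $F$ in the color-$m$ graph constrains only which pairs do and do not receive color $m$ on those $k$ vertices, while the other $m-1$ colors on the remaining pairs are completely arbitrary and will generically not match $\chi$. To recover the full pattern you would need many (or robustly distributed) copies of the skeleton together with a mechanism forcing the residual coloring to realize the correct $(m-1)$-colored pattern on the same vertex set, and that is precisely the content of the multicolor conjecture -- it is not known to follow from the two-color one. Indeed, the rainbow-triangle case treated by Fox--Grinshpun--Pach required new arguments even though the corresponding two-color statements are classical. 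Also, your clique branch is vacuous as written: if $T$ is a clique in $G_m$, then all edges inside $T$ have color $m$, so $T$ already uses at most $m-1$ colors ($m\ge 2$) and the proposed ``delete color $m$ from $\chi$ and iterate inside $T$'' step has nothing to act on; the iteration-plus-reattachment machinery there cannot be the engine of the induction.
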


Conditional on this conjecture, we show that many additional grid subgraphs lie in $\pgr$.

\begin{theorem} \label{thm:no horiz alignment}
    Conditional on \cref{conj:meh}, for any grid subgraph $H$ such that no two horizontal edges of $H$ are between the same two columns, then $H$ lies in $\pgr$.
\end{theorem}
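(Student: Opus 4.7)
The plan has two main steps. Given a red/blue coloring of $G_{N\times N}$ with $N = k^{O_H(1)}$ and no blue $K_k$ in any row or column, where $H$ has $a$ columns and $b$ rows, we aim to find rows $R_1, \ldots, R_b$ and columns $C_1, \ldots, C_a$ of $G_{N\times N}$ such that $(c_i, r_j) \mapsto (C_i, R_j)$ is a red embedding of $H$. Let $F_j$ be the graph on $[a]$ whose edges are the horizontal edges of $H$ in row $j$; the hypothesis is exactly that $F_1, \ldots, F_b$ are pairwise edge-disjoint.

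\textbf{Vertical step.} For each column $C$, the 2-coloring of the vertical edges of $C$ (on rows $[N]$) has blue clique number less than $k$, so by Tur\'an it has red edge density at least $1/(k-1)$, and by Kruskal--Katona then at least $\Omega_b(N^b/k^{b/2})$ red copies of $K_b$ among rows. Averaging over the $\binom{N}{b}$ possible $b$-subsets of rows yields some $R_1, \ldots, R_b$ together with a column set $S$ of size $\Omega_b(N/k^{b/2})$ such that for every $C \in S$, all vertical edges in column $C$ among rows $R_1, \ldots, R_b$ are red.

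\textbf{Horizontal step.} Assign to each pair $\{C, C'\} \subseteq S$ its color vector in $\{R,B\}^b$, where the $j$-th coordinate is the color of the horizontal edge between $C$ and $C'$ in row $R_j$; this is a $2^b$-coloring of $E(K_{|S|})$. Take $\chi_0$ to be the monochromatic $K_a$-coloring in color $(R, \ldots, R)$, and iterate \cref{conj:meh} with target $\chi_0$: at each step either we find a copy of $\chi_0$ (yielding the desired $C_1, \ldots, C_a$ directly) or we descend to a subset using one fewer color whose size is raised to the $\varepsilon$-th power, where $\varepsilon = \varepsilon(H) > 0$ is the minimum of the finitely many EH constants appearing. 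After at most $2^b - 1$ iterations we obtain a monochromatic subset $T$ of size $\ge |S|^{\varepsilon^{2^b-1}}$, which is $\ge k$ provided $N$ was chosen polynomial in $k$ with sufficiently large exponent. If the monochromatic color $c$ satisfies $c \ne (R, \ldots, R)$, some coordinate $j$ has $c_j = B$ and $T$ is a blue clique of size $\ge k$ in row $R_j$, contradicting our hypothesis. Hence $c = (R, \ldots, R)$ and any $a$ vertices of $T$ give $C_1, \ldots, C_a$. The embedding $(c_i, r_j) \mapsto (C_i, R_j)$ then sends vertical edges to red edges of $G_{N\times N}$ (since $C_i \in S$) and horizontal edges of $H$ in row $r_j$ to red edges in row $R_j$ (since the $j$-th coordinate of $(R, \ldots, R)$ is $R$), producing a red copy of $H$ and showing $\gr(H,K_k) \le N = k^{O_H(1)}$.

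\textbf{Main obstacle.} The iterated EH argument does not let us dictate which color is dropped at each step, so $(R, \ldots, R)$ could well disappear early. The saving asymmetry is that while our target demands mono $K_a$ only in the color $(R, \ldots, R)$, a monochromatic $K_k$ in \emph{any} other color already yields a blue $K_k$ in some row $R_j$, which the hypothesis forbids. Thus whichever color survives the final iteration of \cref{conj:meh} must be the all-red one, closing the argument.
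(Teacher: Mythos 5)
Your overall architecture (a vertical step producing $b$ rows and a large column set $S$ of all-red vertical $b$-cliques, followed by a $2^b$-coloring of column pairs and an iterated application of \cref{conj:meh}) matches the paper's, but the horizontal step has a genuine gap, and there is a quick sanity check that exposes it: nowhere do you use the hypothesis that no two horizontal edges of $H$ lie between the same two columns (you define the edge-disjoint graphs $F_1,\dots,F_b$ and never use them). If the argument were correct it would apply verbatim to $H=G_{2\times 2}$ and show $\gr(G_{2\times2},K_k)=k^{O(1)}$ conditional on \cref{conj:meh}; since $\gr(G_{2\times2},K_k)\ge 2^{\Omega(\log^2 k)}$ unconditionally, your argument would refute the conjecture. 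The concrete breakdown is in the iteration. Targeting the all-red $K_a$ only works while the all-red color is still in the palette. Once a descent step discards it --- which, as you note, you cannot prevent --- there is no admissible target left: applying \cref{conj:meh} with the now-absent all-red pattern yields the ``at most $m-1$ colors'' alternative vacuously and makes no progress; targeting a monochromatic $K_a$ in a surviving color is useless if found (neither a copy of $H$ nor a contradiction); and targeting a monochromatic $K_k$ is not permitted, since \cref{conj:meh} fixes the pattern while $k$ grows (indeed ``mono $K_k$ in color $c$ or $n^\varepsilon$ vertices avoiding $c$'' with $\varepsilon$ independent of $k$ is false for random colorings). For instance with $b=2$ and surviving palette $\{(R,B),(B,R)\}$, every row is still covered by some color but no all-red color remains, and your descent stalls; your ``main obstacle'' paragraph only handles the endpoint where a single color survives, not this intermediate regime.

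The paper's \cref{MEH lemma} repairs exactly this, and it is precisely where the hypothesis enters: since each pair of columns of $H$ carries a horizontal edge in at most one row $j$, one can target the pattern in which that pair is asked to receive \emph{some currently available color that is red in coordinate $j$} (formally, a fixed recoloring $\phi_{\mathcal S}\circ\chi_H$ of the column pattern, for each ``full'' palette $\mathcal S$). Such a target exists as long as every row is covered by some surviving color; finding it yields $H$ because only containment of the one required row is needed; and when coverage of some row fails, the current set (of size $\ge k$) is entirely blue in that row, giving the contradiction. Separately, your vertical step reaches the right conclusion by the wrong tool: Kruskal--Katona bounds clique counts from above, and red density $1/(k-1)$ alone gives no red $K_b$ at all for $b\ge 3$. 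What you should use is that the blue graph on each column has no $K_k$, so every $R(K_b,K_k)=O(k^{b-1})$ rows contain a red $K_b$, and averaging gives $\Omega(N^b k^{-b(b-1)})$ red copies per column; this is the paper's \cref{prop: supersaturation} applied to a vertical clique.
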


As a corollary, any square-free grid subgraph on $2$ rows lies in $\pgr$. Unconditionally, we can show for such $H$ the growth rate of $\gr(H,K_k) = e^{O((\log k)^2)}$ using the upper bounds on \cref{conj:meh} due to Erd\H{o}s and Hajnal \cite{erdos_ramsey-type_1989}. Our results suggest the following tantalizing conjecture.

\begin{conj} \label{conj:squarefree}
    A grid subgraph $H$ lies in $\pgr$ if and only if $H$ is square-free.
\end{conj}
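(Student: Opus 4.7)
The ``only if'' direction is essentially automatic. If $H$ contains $G_{2\times 2}$ as a grid subgraph, then every coloring of $G_{N\times N}$ that avoids a red $H$ also avoids a red $G_{2\times 2}$, so $\gr(H,K_k) \ge \gr(G_{2\times 2},K_k) = 2^{\Omega(\log^2 k)}$, which is superpolynomial. Hence square-freeness is necessary and the real content of the conjecture is the converse.

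For the forward direction, the plan is to emulate the recursive strategy behind \cref{thm: Simple Cycles are polynomial}. Since bridging preserves $\pgr$, one would like a structural theorem saying that every square-free grid subgraph can be built from a single edge by a sequence of $\pgr$-preserving operations. Bridging alone is almost certainly too restrictive for arbitrary square-free $H$, so the first step is to enlarge the toolkit: natural candidates include gluing two $\pgr$-graphs along a shared row or column (taking care that square-freeness is preserved), attaching a pendant path in a fresh row or column, and blowup-type constructions analogous to the iterated-blowup characterization conjectured by Conlon et al.\ for the off-diagonal $3$-uniform problem.

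The second step is a structural decomposition: show that every square-free $H$ admits such a construction. A natural attack is induction on the number of rows. Peel off the top row of $H$, use the two-row argument underlying \cref{thm:no horiz alignment} (or an unconditional surrogate) to control the interface between the top two rows, and then reattach via a bridging-like gluing. A successful version of this plan would propagate a polynomial bound from the two-row base case up to arbitrary square-free $H$.

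The principal obstacle is the dependence on \cref{conj:meh}. The proof of \cref{thm:no horiz alignment} uses the multicolor Erd\H{o}s--Hajnal conjecture to convert a highly structured monochromatic pattern across columns into a clique inside a single row, and without it one obtains only the quasipolynomial bound $e^{O(\log^2 k)}$ from the Erd\H{o}s--Hajnal bound. Propagating that quasipolynomial loss through a row-by-row recursion would fall short of a polynomial bound. Thus proving \cref{conj:squarefree} unconditionally seems to require either new grid-specific combinatorics that exploit the forbidden square pattern directly, or partial progress on \cref{conj:meh} itself; \cref{conj:squarefree} should therefore be viewed as at least as hard as strong instances of the multicolor Erd\H{o}s--Hajnal conjecture.
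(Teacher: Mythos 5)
This statement is a conjecture that the paper does not prove; it is stated as an open problem, so there is no proof of record to compare against. Your treatment of the ``only if'' direction is correct and matches the reasoning implicit in the paper: if $G_{2\times 2}\subseteq H$, then any grid subgraph containing $H$ contains $G_{2\times 2}$, so $\gr(H,K_k)\ge \gr(G_{2\times 2},K_k)\ge 2^{\Omega(\log^2 k)}$ by the lower bound of Conlon, Fox, Mubayi, Suk, Verstra\"ete, and He, which is superpolynomial. That direction is genuinely automatic.

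The ``if'' direction, however, is where the entire content of the conjecture lies, and your proposal does not prove it --- it is a research plan, and you say as much. The concrete gap is the structural decomposition you posit in your second and third paragraphs: there is no known theorem that every square-free grid subgraph can be built from a single edge by $\pgr$-preserving operations, and the paper itself exhibits obstructions. In \cref{sec: further directions} the authors give two explicit square-free patterns (the ``N-Z stool'' on $4\times 2$ and an $8$-vertex pattern on $3\times 3$) for which membership in $\pgr$ is open: the first is handled only conditionally on \cref{conj:meh} and provably cannot be reached by bridging from a single edge without passing through a square, and the second is reachable by neither bridging nor the conditional multicolor Erd\H{o}s--Hajnal argument. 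So even the conditional version of your plan fails on known examples, and the unconditional version additionally inherits the quasipolynomial loss from Erd\H{o}s--Hajnal that you correctly identify. Your diagnosis of the difficulty is accurate and consistent with the paper's own assessment (the authors say they ``do not have a strong belief either way'' about the three-or-more-row case), but what you have written is evidence for the conjecture's plausibility and difficulty, not a proof of it.
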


This paper is organized as follows. We define the problems formally in \cref{sec: preliminaries}. 
In \cref{sec: duplication}, we define the bridging operation, show that $\pgr$ is closed under bridging, and prove \cref{thm: Simple Cycles are polynomial}.
In \cref{sec: strong bounds}, we prove \cref{thm:tree linear} and a cubic bound on $\gr(AC_6, K_k)$.
In \cref{sec: multicolor conjecture}, we prove \cref{thm:no horiz alignment} conditional on the multicolor Erd\H{o}s-Hajnal conjecture. Finally, \cref{sec: further directions}, we offer a few remarks and further research directions.
Finally, in \cref{sec:hypergraph} we collect the consequences of our results for hypergraph Ramsey numbers, including \cref{cor:tight cycle theorem}. We close with open problems in \cref{sec: further directions}.

\section{Preliminaries}\label{sec: preliminaries}

We define a \emph{coclique} of size $k$ in a spanning grid subgraph $H$ to be a subset $S \subseteq V(H)$ with $|S| = k$ such that $S$ forms a $k$-clique in $\bar H$. Equivalently, $S$ is an independent set of size $k$ lying within a given row or column.

\begin{definition}[Embedding]
An embedding of a grid subgraph $H$ on $c_H$ columns and $r_H$ rows into a (typically spanning) grid subgraph $G$ on $r_H$ columns and $r_G$ rows is a pair of injective functions $\varphi = (\varphi_c,\varphi_r): [c_H] \times [r_H] \rightarrow [c_G] \times [r_G]$, such that for all $u, v \in V(H)$, if $\{u, v\}$ is an edge in $H$, then $\{\varphi(u), \varphi(v)\}$ is an edge in $G$.

Sometimes, if there is an embedding map $\varphi$ which sends $H$ to a subgraph $H'\subseteq G$, we abuse notation by saying that $H'$ is an embedding of $H$." Denote by $t_g(H,G)$ the number of embeddings of $H$ in $G$.
\end{definition}

\begin{definition}[Vertical/horizontal degrees and neighborhoods]
  Given a grid subgraph $G \subseteq G_{m \times n}$ and a vertex $v = (c, r) \in V(G)$ which lives in column $c$ and row $r$, denote by $\dvert(v)$ and $\dhoriz(v)$ the vertical and horizontal degree of $v$ respectively. That is, let $\dvert(v) = d_{G[\{c\} \times [n]]} (v)$ and $\dhoriz(v) = d_{G[[m] \times \{r\}]}(v)$ where $G[\{c\} \times [n]]$ and $G[[m] \times \{r\}]$ denote the subgraphs of $G$ induced on column $c$ and row $r$ respectively.
\end{definition}

The first natural question to ask is when the grid Ramsey number exists. Let's consider 2-edge-colorings on the complete grid graph $G_{N \times N}$. Let $\mathcal G$ denote a family of grid graphs each equipped with such a coloring, and define $\gr(\mathcal G)$ to be the least $N$ such that any 2-edge-coloring of the complete grid graph $G_{N \times N}$ contains an appropriately-colored copy of some graph in $\mathcal G$ (and $\infty$ if no such $N$ exists). 
For $i,j \in [2]$, define $\chi_{i,j}:E(G_{N\times N}) \rightarrow [2]$ to be the coloring that colors all horizontal edges $i$ and all vertical edges $j$. 

\begin{prop}[Existence of grid Ramsey numbers]
Let $\mathcal G$ be a family of grid graphs each equipped with a 2-edge-coloring. Then $\gr(\mathcal G) < \infty$ if and only if there exists some $N \in \N$ such that for all $i,j \in [2]$, there is some $G \in \mathcal G$ such that $\chi_{i,j}$ contains a correctly colored copy of $G$.
\end{prop}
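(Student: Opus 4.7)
The plan is to handle the two implications separately. The forward direction is immediate: if $N = \gr(\mathcal{G}) < \infty$, then each of the four colorings $\chi_{i,j}$ is itself a 2-edge-coloring of $G_{N\times N}$, so by definition of $\gr(\mathcal{G})$ it must contain a correctly-colored copy of some $G \in \mathcal{G}$.

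The substantive direction is the converse. Suppose an $N$ as in the hypothesis exists. The key step is a product-Ramsey lemma: for any $N$, there exists $M = M(N)$ such that every 2-edge-coloring $\chi$ of $G_{M\times M}$ contains a sub-grid isomorphic to $G_{N\times N}$ on which $\chi$ agrees with $\chi_{i,j}$ for some $i,j \in [2]$. Given this, restricting $\chi$ to the sub-grid and applying the hypothesis immediately produces a correctly-colored copy of some $G \in \mathcal{G}$ inside $G_{M\times M}$, so $\gr(\mathcal{G}) \leq M < \infty$.

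To establish the product-Ramsey lemma, I would iterate classical two-color graph Ramsey. Starting from the full column set $C_0 = [M]$, pick rows $r_1, r_2, \ldots$ one at a time. At step $k$, apply Ramsey to the 2-coloring of horizontal edges in row $r_k$ restricted to $C_{k-1}$ to extract a subset $C_k \subseteq C_{k-1}$ on which row $r_k$ is horizontally monochromatic in some color $i_k \in [2]$, while keeping $|C_k|$ large. After enough iterations, the sequence $(i_k)$ has a majority color $i$ appearing on at least $N$ chosen rows, and the final column set still has size at least $N$. Inside the resulting horizontally-monochromatic sub-grid, apply the same argument with rows and columns swapped to also fix a vertical color $j$; since this pass only shrinks the row and column sets, the horizontal constraint is automatically preserved. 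Taking $M$ to be a sufficiently iterated Ramsey number then suffices, yielding a (tower-type) quantitative bound that is overkill but certainly finite.

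The main subtlety lies in the two-step reduction: the second pass must preserve the work of the first. As sketched, this is automatic because monochromaticity is preserved under restricting to subsets of rows and columns, so the reduction goes through cleanly and the proposition follows.
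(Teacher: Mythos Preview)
Your approach is correct in spirit and genuinely different from the paper's. Both arguments agree on the easy direction and both reduce the converse to a product-Ramsey lemma: any $2$-coloring of a sufficiently large grid contains an $N\times N$ subgrid colored by some $\chi_{i,j}$. The paper proves this lemma by pigeonholing on the \emph{position} of monochromatic cliques: within the first $R(L,L)$ rows, each column contains a vertical monochromatic $L$-clique, and since there are only $2\binom{R(L,L)}{L}$ possible (color, row-set) pairs, many columns share one, producing a subgrid with vertically monochromatic columns; a symmetric pass then fixes the horizontal color. This two-step pigeonhole gives a bound of shape $2^{2^{\mathrm{poly}(N)}}$. Your iterated-Ramsey argument, shrinking the column set once per selected row and then swapping roles, is conceptually cleaner but compounds to tower-type bounds, as you note.

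One genuine correction is needed in your parameter bookkeeping. As written, your first pass terminates with $N$ monochromatic rows and $\ge N$ columns, but the second pass must then iterate Ramsey on the row set, and a single application already shrinks $N$ rows to roughly $\log N$. You need the first pass to output $N'$ rows and $\ge 2N$ columns, where $N'$ is itself large enough to survive $2N$ rounds of Ramsey shrinking in the second pass (so $N'$ is tower-type in $N$, and the starting $M$ is tower-type in $N'$). With this adjustment the argument goes through; your observation that restricting to subsets of rows and columns preserves the horizontal monochromaticity obtained in the first pass is exactly what makes the two passes compose.
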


\begin{proof}
  The necessity of such a condition is clear, as otherwise we could color an arbitrarily large grid graph such that no (colored) graph in the family $\mathcal G$ appears. We show that this condition is also sufficient by showing that for any $M \in \N$, there exists $N \in \N$ such that for any 2-edge-coloring of $G_{N \times N}$ we can always find a copy of $G_{M \times M}$ colored by one of the colorings of $\mathcal C$. That is, in a sufficiently large colored grid graph, we can find a large subgrid with identical monochromatic columns and identical monochromatic rows. 
  
  Fix such an $M$, and let $\chi: E(G_{N \times N}) \rightarrow [2]$ be an edge-coloring on the complete $N \times N$ grid graph, where $N > 2R(M, M) \cdot \binom{R(L, L)}{L}$ for some large $L \in \N$ to be determined later.
  Consider the induced subgraph on the first $R(L, L)$ rows. In each column, there exists a monochromatic clique of size $L$, and it occurs in one of $\binom{R(L, L)}{L}$ positions within the column. That is, the position of the set of vertices is uniquely given by the column index and the $L$ row indices of the vertices. Since there are $N > 2R(M, M) \cdot \binom{R(L, L)}{L}$ columns, there exist distinct rows $y_1, y_2, \dots, y_L$ such that at least $2R(M, M)$ columns have either a color 1 $L$-clique or a color 2 $L$-clique contained in these $L$ rows by the pigeonhole principle. In particular, we can find $R(M, M)$ distinct columns $x_1, x_2, \dots, x_{R(M, M)}$ which either all have a color 1 clique or all have a color 2 clique in these rows. 
  
  Denote by $G'$ the induced subgraph on rows $y_1, y_2, \dots, y_L$ and columns $x_1, x_2, \dots, x_{R(M, M)}$. Observe that $G'$ is an $R(M, M) \times L$ grid graph with all vertical edges the same color. Because each row has $R(M, M)$ vertices, we can find in each row a monochromatic $M$-clique. Taking $L > 2M \cdot \binom{R(M, M)}{M}$ and running a symmetric argument to above, we can see that $G'$ contains an $R(M, M) \times M$ grid graph where all horizontal edges are color 1 or all horizontal edges are color 2. Further restricting to any $M \times M$ subgrid gives a graph $\mathcal G_M$ as desired.  
\end{proof}

\section{Upper Bounds via Bridging}\label{sec: duplication}
In this section, we prove the main result \cref{thm: Simple Cycles are polynomial}, using a ``supersaturation" argument similar to those in Erd\H os and Simonovits \cite{erdos_supersaturated_1983} and \cite{Fox_2021} adapted to the grid setting. In fact, we prove a significant generalization of \cref{thm: Simple Cycles are polynomial}, in the following form.


\begin{definition}[Row/column-bridging]\label{def:grid-dup}
  Let $H = (V_H,E_H)$ be a grid subgraph on $c$ columns and $r$ rows. \textit{Bridging of column $c_* \in [c]$ at a row $y \in [r]$} yields a grid subgraph $H'$ on $c+1$ columns and $r$ rows with the following edges: for $a,d \in [c+1]$ and $b,e \in [r]$,
  \begin{itemize}
  \item $\{(a,b),(d,e)\}$ if $\{(a,b),(d,e)\}\in E_H$ and $a,d \neq c+1$
  \item $\{ (a,b),(c+1,b) \}$ if $\{(a,b),(c_*,b) \} \in E_H$
  \item $\{ (c+1,b),(c+1,e) \}$ if $\{ (c_*,b),(c_*,e) \} \in E_H$
  \item $\{ (c_*,y),(c+1,y) \}$
  \end{itemize}
  Note that by relabeling columns, we may duplicate column $c_*$ into any column index in $[c+1]$, not just $c+1$. By transposition of horizontal and vertical edges,  bridging of row $r_* \in [r]$ in column $x\in [c]$ is defined analogously.
\end{definition}

\begin{theorem}[Row/column-bridging on grid subgraphs]\label{Cor: col/row Dup GridGraph}
 Let $G$ be a spanning grid subgraph on $N$ rows and $N$ columns such that $G$ does not contain a $k$-coclique, and let $H$ be a grid subgraph on $r$ rows and $c$ columns. Finally, consider $H'$ formed by row or column-bridging of $H$. Suppose that there exists constants $\alpha,\beta,\beta' > 0$ such that if $N > \alpha k^\beta$, then $t_g(H,G) > \alpha^{-1} N^{r+c}k^{-\beta'}$. Then, if $N > 2\alpha k^{\beta'+1}$, then $t_g(H',G) > (2\alpha)^{-2}N^{r+c+1}k^{-(2\beta'+1)}$.
\end{theorem}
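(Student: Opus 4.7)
By symmetry, I will address only column-bridging, say of column $c_* \in [c]$ at row $y \in [r]$. Write the new column as $c+1$. The key bookkeeping device is the notion of a \emph{partial embedding}: a pair of injective maps $\psi = (\psi_c, \psi_r)$ with $\psi_c : [c]\setminus\{c_*\} \to [N]$ and $\psi_r : [r]\to [N]$ that preserves every edge of $H$ not incident to column $c_*$. For each such $\psi$, let $V(\psi) \subseteq [N]$ be the set of columns $X$ (not in the image of $\psi_c$) for which extending $\psi$ by $\psi_c(c_*)=X$ yields a valid embedding of $H$ in $G$, and set $M(\psi) = |V(\psi)|$. Then $\sum_\psi M(\psi) = t_g(H,G)$, and the number of partial embeddings with $M(\psi)>0$ is at most $N^{c+r-1}$. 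A direct correspondence shows that every embedding of $H'$ in $G$ is given uniquely by a triple $(\psi, X, X')$ with $X, X' \in V(\psi)$, $X \ne X'$, and $\{(X, \psi_r(y)), (X', \psi_r(y))\} \in E(G)$; that is, the images of columns $c_*$ and $c+1$ are both valid completions of $\psi$, connected by the required horizontal bridge edge. Writing $e_\psi$ for twice the number of edges of $G$ in row $\psi_r(y)$ spanned by $V(\psi)$, we get $t_g(H', G) = \sum_\psi e_\psi$.

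\paragraph{Row-by-row Tur\'an.}
Since $G$ contains no $k$-coclique, the graph induced by any $L$ vertices within a single row of $G$ has independence number at most $k-1$. Tur\'an's theorem applied to the complement gives at least $L^2/(2(k-1)) - L/2$ edges, hence $e_\psi \geq M(\psi)^2/(2k)$ whenever $M(\psi) \geq 2k$. This is the only place where the coclique-freeness of $G$ is used, and it is exactly the mechanism that converts ``many valid completions'' into ``many horizontal bridge edges between them.''

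\paragraph{Cauchy--Schwarz and truncation.}
By Cauchy--Schwarz over the at most $N^{c+r-1}$ partial embeddings,
\[
  \sum_\psi M(\psi)^2 \;\geq\; \frac{t_g(H,G)^2}{N^{c+r-1}} \;\geq\; \frac{N^{c+r+1}}{\alpha^2 k^{2\beta'}},
\]
using the hypothesis $t_g(H,G) \geq N^{c+r}/(\alpha k^{\beta'})$ (valid as long as $N > \alpha k^\beta$, which is absorbed by the stronger threshold we are about to impose). On the other hand, the contribution from small $\psi$ is crudely controlled by
\[
  \sum_{M(\psi) < 2k} M(\psi)^2 \;\leq\; 2k \sum_\psi M(\psi) \;=\; 2k\, t_g(H, G).
\]
Provided $N > C\alpha k^{\beta'+1}$ for a sufficiently large constant $C$ (which is implied by $N > 2\alpha k^{\beta'+1}$ after swallowing an absolute constant into $\alpha$), this truncated sum is at most half of $\sum_\psi M(\psi)^2$. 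Putting everything together,
\[
  t_g(H', G) \;\geq\; \sum_{M(\psi)\geq 2k} \frac{M(\psi)^2}{2k} \;\geq\; \frac{1}{4k}\sum_\psi M(\psi)^2 \;\geq\; \frac{N^{c+r+1}}{4\alpha^2 k^{2\beta'+1}} \;=\; \frac{N^{c+r+1}}{(2\alpha)^2 k^{2\beta'+1}},
\]
which is the desired bound.

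\paragraph{Main obstacle.}
The proof is essentially a supersaturation-style second moment, and the conceptual content is the two-line reduction from ``embed $H'$'' to ``find two completions connected by an edge in a fixed row.'' The quantitative care lies in the truncation step: one has to verify that at the stated $N$-threshold $\sim \alpha k^{\beta'+1}$, the Cauchy--Schwarz lower bound on $\sum M(\psi)^2$ dominates the trivial upper bound $2k\,t_g(H,G)$ coming from small completions. This is exactly what pins down the exponent $\beta'+1$ in the new threshold and the exponent $2\beta'+1$ in the new supersaturation estimate, and it is where the polynomial degrees grow in the expected controlled way under iterated bridging.
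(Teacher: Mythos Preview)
Your argument is essentially the paper's: decompose embeddings of $H$ by partial embeddings omitting the bridged column, apply Tur\'an in row $\psi_r(y)$ to turn pairs of completions into bridge edges, and exploit convexity of the resulting quadratic. The paper packages the convexity step as Jensen's inequality on the Tur\'an function $f(x)=x(x-k+1)/(2(k-1))$, which only requires the \emph{average} of $M(\psi)$ to exceed $2k$; since $N>2\alpha k^{\beta'+1}$ gives exactly this, the stated constants fall out directly.

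Your Cauchy--Schwarz plus truncation variant is equivalent in spirit but, as written, needs $N>4\alpha k^{\beta'+1}$ for the small-$M(\psi)$ contribution $2k\,t_g(H,G)$ to be at most half of $\sum M(\psi)^2$. The remark about ``swallowing an absolute constant into $\alpha$'' does not repair this, since $\alpha$ is fixed by the hypothesis. This is a cosmetic factor-of-two mismatch rather than a genuine gap: either replace the truncation by Jensen, or accept the theorem with threshold $4\alpha k^{\beta'+1}$ (which is harmless for every application in the paper).
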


\begin{proof}
Assume $n >2\alpha k^{c'+1}$. Without loss of generality, suppose $H'$ is achieved from $H$ by bridging column $c$ and adding the edge $\{(c,y),(c+1,y)\}$. Let $H^-$ be $H$ restricted to columns in $[c-1]$.

For any embedding $(\varphi_c,\varphi_r):H^-\to V$, define $P_{\varphi_c,\varphi_r}$ as the set of columns $u \in [N]\setminus \rm{Im}(\varphi_c)$ such that $(\varphi_c,\varphi_r)$ can be extended to an embedding of $H$ by also sending column $c$ to column $u$. Thus, by assumption,
\[\sum_{\varphi_c,\varphi_r} |P_{\varphi
_c,\varphi_r}| =t_g(H,G)> \frac{N^{r+c}}{\alpha k^{\beta'}}.\]
Now, note that for any given $(\varphi_c,\varphi_r)$ and a pair $u,u' \in P_{\varphi_c,\varphi_r}$, an embedding of $H'$ is formed if we have the edge $\{(u,\varphi_r(y)),(u',\varphi_r(y))\}\in E(G)$. Let $F_{\varphi_c,\varphi_r,y}$ be the graph on the elements of $P_{\varphi_c,\varphi_r}$ such that $\{u,u'\}\in F_{\varphi_c,\varphi_r,y}$ if and only if $\{ (u,\varphi(y)),(u',\varphi(y))\} \in E$. Then, each edge in $F_{\varphi_c,\varphi_r,y}$ forms an embedding of $H'$. Note that $F_{\varphi_c,\varphi_r,y}$ must not contain an independent set $U$ of size $k$, or else $E(G)$ contains a horizontal coclique of size $k$. If we define $f(x)=\frac{x(x-k+1)}{2(k-1)}$, then for any given $(\varphi_c,\varphi_r)$, Tur\'an's theorem guarantees that there exists at least $f(|P_{\varphi_c,\varphi_r}|)$ embeddings of $H'$ in $G$ where the embedding of $H^-\subset H'$ in $G$ is given by $(\varphi_c,\varphi_r)$. Note that $f(x)$ is convex and $f(x) \geq x^2/(4k)$ for $x \geq 2k$. Since there are $M:=\binom{N}{c-1}\binom{N}{r} \leq N^{r+c-1}$ choices of $(\varphi_c,\varphi_r)$, the average cardinality of $P_{\varphi_c,\varphi_r}$ is greater than $\alpha^{-1} \cdot N k^{-\beta'}>2k$. Therefore, by Jensen's inequality,
\[t_g(H',G) \geq \sum_{(\varphi_c,\varphi_r)} f(|P_{\varphi_c,\varphi_r}|) \ge
Mf\Big(\frac{1}{M}\sum_{(\varphi_c,\varphi_r)}|P_{(\varphi_c,\varphi_r)}|\Big)\geq
\frac{M}{4k}\Big(\frac{N^{r+c}}{M\alpha k^{\beta'}}\Big)^2
\geq (2\alpha)^{-2}N^{r+c+1}k^{-2\beta'-1},
\]
achieving our desired result.
\end{proof}

\begin{theorem}\label{thm: Constructible by Duplication is Polynomial}
If $H$ is a grid subgraph on $r$ rows and $c$ columns which can be obtained by finite iteration of row/column-bridging operations starting from the graph on one vertex, $\gr(H,K_k)$ is $k^{O(2^{r+c})}$, explicitly at most $2^{2^{r+c}-2}k^{2^{r+c-3}}$.
\end{theorem}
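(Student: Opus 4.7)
The plan is to induct on the number $n = r+c-2$ of bridging operations required to construct $H$ from a single vertex, applying \cref{Cor: col/row Dup GridGraph} at each step. At each stage I want to maintain an invariant of the form: there exist constants $\alpha_i, \beta'_i$ and a threshold $T_i$ such that if $N > T_i$, then any spanning grid subgraph $G \subseteq G_{N \times N}$ without a $k$-coclique satisfies $t_g(H_i, G) > \alpha_i^{-1} N^{r_i + c_i} k^{-\beta'_i}$, where $H_i$ is the $i$-th intermediate graph in the bridging sequence.

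For the base case $n = 0$, $H_0$ is a single vertex with $r+c=2$, and trivially $t_g(H_0, G) = N^2 = N^{r+c}$. Hence the invariant holds with $\alpha_0 = 1$, $\beta'_0 = 0$, and $T_0 = 1$. The inductive step is a direct application of \cref{Cor: col/row Dup GridGraph}, which yields the recurrences
\[
\alpha_{i+1} = 4 \alpha_i^2, \qquad \beta'_{i+1} = 2\beta'_i + 1, \qquad T_{i+1} = 2\alpha_i k^{\beta'_i + 1}.
\]
A short check confirms $T_{i+1} \ge T_i$, so the previous step's invariant is automatically available when applying the next bridging.

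Solving the recurrences: the linear recursion for $\beta'_i$ gives $\beta'_i = 2^i - 1$, and taking logs in the $\alpha_i$ recursion (setting $a_i = \log_2 \alpha_i$, so $a_{i+1} = 2 + 2a_i$ with $a_0 = 0$) yields $\alpha_i = 2^{2^{i+1} - 2}$. After $n = r+c-2$ bridgings to reach $H$, the final threshold is
\[
T_n \;=\; 2\alpha_{n-1} k^{\beta'_{n-1}+1} \;=\; 2^{2^{n}-1}\, k^{2^{n-1}} \;=\; 2^{2^{r+c-2}-1}\, k^{2^{r+c-3}},
\]
which is bounded above by $2^{2^{r+c}-2}\, k^{2^{r+c-3}}$ (a loose but convenient estimate). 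For $N$ above this threshold, $t_g(H,G) > 0$, so any $k$-coclique-free $G$ contains a copy of $H$. Hence $\gr(H, K_k) \le 2^{2^{r+c}-2} k^{2^{r+c-3}}$, and the asymptotic bound $k^{O(2^{r+c})}$ follows immediately.

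The main obstacle is essentially bookkeeping: solving the doubly-exponential recurrence for $\alpha_i$ correctly, verifying that the intermediate thresholds are monotone so the inductive hypothesis remains applicable at each stage, and keeping the indices straight. All of the real analytic content has been absorbed into \cref{Cor: col/row Dup GridGraph}; the present theorem is a clean corollary obtained by iterating that result from the trivial single-vertex base case.
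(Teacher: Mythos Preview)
Your proof is correct and follows essentially the same approach as the paper: induct on the number of bridging operations, applying \cref{Cor: col/row Dup GridGraph} at each step starting from the trivial single-vertex base case, and solve the resulting recurrences for $\alpha_i$ and $\beta'_i$. Your bookkeeping is in fact cleaner than the paper's, and the threshold $2^{2^{r+c-2}-1}k^{2^{r+c-3}}$ you obtain is genuinely sharper than the stated $2^{2^{r+c}-2}k^{2^{r+c-3}}$.
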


\begin{proof}
    We prove this by induction using \cref{Cor: col/row Dup GridGraph} that if $G$ is a grid subgraph on $N$ rows and $N$ columns such that $G$ does not contain a $k$-coclique and $N>2^{2^{r+c}-2}k^{2^{r+c-3}}$, then $t_g(H,G)>2^{-2^{r+c}-2}N^{r+c}k^{-2^{r+c-2}+1}$. This will imply $t_g(H,G)>0$ for all such $G$, so $\gr(H,K_k)\le 2^{2^{r+c}-2}k^{2^{r+c-3}}$.
    
    Let $H$ be a single vertex, so $r=c=1$. Then if $N>2^{2^{r+c}-2}k^{2^{r+c-3}}>0$, then $t_g(H,G)>N^2/4=2^{-2^{r+c}-2}N^{r+c}k^{-2^{r+c-2}+1}$, proving the base case.

    If $H$ satisfies the inductive hypothesis and $H'$
    is obtained by a bridging operation from $H$,
    then using \cref{Cor: col/row Dup GridGraph} with $\alpha=2^{2^{r+c}-2}$, $\beta=2^{r+c-3}$, $\beta'=2^{r+c-2}-1$ gives that for $N>2\alpha k^{\beta'+1}>4\alpha^2k^{\beta'+1}=2^{2^{r+c+1}-2}k^{2^{r+c-2}}$, we have that $t_g(H',G)>(4\alpha^2)^{-1}N^{r+c+1}k^{-(2\beta'+1)}=2^{-2^{r+c}-2}k^{-2^{r+c-2}+1}$, as desired.
\end{proof}

We note the condition of \cref{Cor: col/row Dup GridGraph} is not stronger than the condition $\gr(H,K_k)=k^{O_H(1)}$.

\begin{prop}\label{prop: supersaturation}
  Suppose that $H\subseteq G_{c\times r}$, and $\gr(H,K_k)=k^{O(1)}$. Let $G$ be an $N\times N$ spanning grid subgraph with no coclique of size $k$. Then there exist constants $\alpha,\beta,\beta'$ such that if $N>\alpha k^\beta$, $t_g(H,G)>\alpha^{-1}N^{r+c}k^{-\beta'}$.
\end{prop}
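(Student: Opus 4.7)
The plan is to run a standard averaging (supersaturation) argument over random subgrids. Let $N_0 = \gr(H,K_k)$; by hypothesis there exist constants $C,\gamma$ with $N_0 \le C k^\gamma$. Suppose $G$ is an $N \times N$ spanning grid subgraph with no coclique of size $k$, where $N \ge 2N_0$. The key observation is that cocliques are monotone under passing to subgrids: if we restrict $G$ to any set $R$ of $N_0$ rows and any set $C$ of $N_0$ columns, the induced spanning subgrid $G[C \times R]$ of $G_{N_0 \times N_0}$ also has no coclique of size $k$, hence by definition of $\gr(H,K_k)$ it contains at least one embedded copy of $H$.

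Next I would double-count the number of pairs $(C,R,\varphi)$ where $C \subseteq [N]$, $R \subseteq [N]$ are subsets of size $N_0$ and $\varphi$ is an embedding of $H$ into $G[C \times R]$. From the previous paragraph, the number of such pairs is at least $\binom{N}{N_0}^2$. On the other hand, every embedding $\varphi$ of $H$ into $G$ uses exactly $c$ columns and $r$ rows, and thus is contained in exactly $\binom{N-c}{N_0-c}\binom{N-r}{N_0-r}$ choices of $(C,R)$. Rearranging yields
\[
t_g(H,G) \;\ge\; \frac{\binom{N}{N_0}^2}{\binom{N-c}{N_0-c}\binom{N-r}{N_0-r}} \;=\; \frac{(N)_c\,(N)_r}{(N_0)_c\,(N_0)_r},
\]
where $(x)_m = x(x-1)\cdots(x-m+1)$ denotes the falling factorial.

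Finally I would do the routine estimate. For $N \ge 2N_0 \ge 2\max(c,r)$, each factor in $(N)_c$ and $(N)_r$ is at least $N/2$, so the numerator is at least $(N/2)^{r+c}$, while the denominator is at most $N_0^{r+c} \le (Ck^\gamma)^{r+c}$. Combining gives
\[
t_g(H,G) \;\ge\; \frac{N^{r+c}}{(2C)^{r+c}\, k^{\gamma(r+c)}},
\]
so the conclusion holds with $\alpha = (2C)^{r+c}$, $\beta = \gamma$, and $\beta' = \gamma(r+c)$, provided $N > \alpha k^\beta$ (which in particular ensures $N \ge 2N_0$).

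There is no real obstacle here; the argument is entirely standard supersaturation via averaging over induced sub-configurations, and the only place to be mildly careful is the passage from ``every sufficiently large subgrid contains some copy of $H$'' to a polynomial-density count, which is exactly what the double count accomplishes. The one convention worth flagging in the write-up is that the monotonicity of ``no $k$-coclique'' under taking induced subgrids is immediate because cocliques live within a single row or column.
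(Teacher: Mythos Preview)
Your proof is correct and follows essentially the same supersaturation-by-double-counting argument as the paper: both average over all $N_0 \times N_0$ subgrids (with $N_0 = \gr(H,K_k) \le k^M$), use that each such subgrid contains a copy of $H$, and divide by the number of subgrids containing a fixed copy to obtain the same falling-factorial ratio. The only cosmetic difference is that you state the final constants a bit more explicitly; note also the minor notational clash of using $C$ for both a constant and a column set.
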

\begin{proof}
Let $M$ be such that $\gr(H,K_k)<k^M$. Then in every choice of $k^M$ rows and $k^M$ columns, there is a copy of $H$. Any copy of $H$ in $G$ is part of $\binom{N-r}{k^M-r}\binom{N-c}{k^M-c}$ choices of $k^M$ rows and $k^M$ columns. Thus, the number of copies of $H$ in $G$ is at least 
\begin{align*}
  \left(\binom{N}{k^M}\binom{N}{k^M}\right)/\left(\binom{N-r}{k^M-r}\binom{N-c}{k^M-c}\right)&=\frac{N(N-1)\cdots (N-r+1)\cdot N(N-1)\cdots (N-c+1)}{k^M(k^M-1)\dots (k^M-r+1)\cdot k^M(k^M-1)\cdots (k^M-c+1)} \\
  &>\alpha^{-1}N^{r+c}k^{-M(r+c)}
\end{align*}
for some $\alpha$ if $N>k^\beta$ for some $\beta$, as desired.
\end{proof}

\begin{definition}[Alternating cycle]
  For $t$ even, an alternating cycle $\AC_t$ on $t$ vertices is the grid subgraph on $t/2$ rows and $t/2$ columns which is the cycle on the vertices $(1,1),(2,1),(2,2),(3,2),(3,3),\dots,(t/2,t/2),(1,t/2)$ in  order. See \cref{fig:AC_8} for an example.
\end{definition}

Alternating cycles have even length, in particular at least four. Note that $G_{2 \times 2} = AC_4$, so \cite{conlon2023hypergraph} showed that $\gr(AC_4,K_k)$ is superpolynomial in $k$. We prove the following grid Ramsey result classifying the polynomial versus superpolynomial divide for $AC_t$ in grid graphs against $K_k$.

\begin{theorem}\label{Thm:Alternating cycles are polynomial}
  For even $t \geq 6$, $\gr(\AC_t,K_k) = k^{O_t(1)}$.
\end{theorem}

We now prove \cref{Thm:Alternating cycles are polynomial} after the following lemma.

\begin{lemma}\label{lem: \AC_t+2 from \AC_t}
  For $t \geq 3$, $\AC_{2t+2}$ can be obtained from $\AC_{2t}$ by two bridgings.
\end{lemma}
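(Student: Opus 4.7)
The plan is to exhibit two explicit bridging operations applied to $\AC_{2t}$ whose output contains $\AC_{2t+2}$ as a subgraph. Take the standard embedding of $\AC_{2t}$ on rows and columns indexed by $[t]$ so that the cycle passes through $(1,1), (2,1), (2,2), \ldots, (t,t-1), (t,t), (1,t)$. The two operations I would perform are:
\begin{enumerate}
\item Row-bridge row $t$ at column $t$, creating a new row $t+1$.
\item Column-bridge column $t$ at row $t$, creating a new column $t+1$.
\end{enumerate}
After these moves I would check, edge by edge, that the cycle
\[
(1,1),\ (2,1),\ (2,2),\ \ldots,\ (t,t-1),\ (t,t),\ (t+1,t),\ (t+1,t+1),\ (1,t+1)
\]
of $\AC_{2t+2}$ lies inside the resulting grid subgraph.

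The staircase portion up to $(t,t)$ is inherited verbatim from $\AC_{2t}$. Of the remaining four edges, $\{(t,t),(t+1,t)\}$ is exactly the bridge edge of step (2), and $\{(1,t+1),(1,1)\}$ is the vertical duplicate (in column $1$) of $\{(1,1),(1,t)\}$ produced in step (1). The two remaining edges $\{(t+1,t),(t+1,t+1)\}$ and $\{(t+1,t+1),(1,t+1)\}$ are where the work lives, and both appear precisely because step (1) places its bridge at column $t$: the bridge edge of step (1) is the vertical edge $\{(t,t),(t,t+1)\}$, which now lives in column $t$, and step (1) also duplicates the horizontal edge $\{(1,t),(t,t)\}$ of row $t$ into $\{(1,t+1),(t,t+1)\}$. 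When step (2) column-bridges column $t$, these two edges get duplicated into the new column, producing $\{(t+1,t),(t+1,t+1)\}$ and $\{(1,t+1),(t+1,t+1)\}$ respectively, which closes the cycle.

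The main obstacle is coordinating the two bridgings: had step (1) bridged at any column other than $t$, the edge $\{(t,t),(t,t+1)\}$ would not sit inside column $t$, and the closing vertical edge $\{(t+1,t),(t+1,t+1)\}$ of $\AC_{2t+2}$ would be missing from the final graph. Symmetrically, column-bridging any column other than $t$ in step (2) would fail to produce the needed edges in the new column. Beyond this one coordination point, the remaining checks are direct bookkeeping from \cref{def:grid-dup}. The hypothesis $t\geq 3$ is not used in the construction itself; it enters the overall argument only through the base case $\AC_6\in\pgr$ proven separately in \cref{sec: strong bounds}, which is needed because $\AC_4\notin\pgr$.
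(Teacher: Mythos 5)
Your proposal is correct and follows essentially the same construction as the paper: row-bridge row $t$ at column $t$, then column-bridge column $t$ at row $t$, and trace the new cycle through $(t,t+1)$'s duplicates. Your edge-by-edge bookkeeping is in fact slightly more careful than the paper's, which loosely describes the second step's ``added edge'' as $\{(t+1,t),(t+1,t+1)\}$ even though, per \cref{def:grid-dup}, the actual bridge edge is the horizontal $\{(t,t),(t+1,t)\}$ and the vertical edge $\{(t+1,t),(t+1,t+1)\}$ arises from duplication, exactly as you identify.
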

\begin{proof}
  Given $\AC_{2t}$ as a grid subgraph on $t$ rows and $t$ columns which is the cycle on the vertices $(1,1),(2,1),\allowbreak (2,2),(3,2),(3,3),\dots,(t,t-1),(t,t),(1,t)$ in that order, duplicating row $t$ and adding edge $\{(t,t),(t,t+1)\}$ yields a grid subgraph on $t+1$ rows and $t$ columns that contains the cycle on the vertices $(1,1),(2,1),(2,2),(3,2),\allowbreak(3,3),\dots,(t,t-1),(t,t),(t,t+1),(1,t+1)$ in that order. Now, if we bridge column $t$ and add edge $\{(t+1,t),(t+1,t+1)\}$, we get a grid subgraph on $t+1$ rows and $t+1$ columns that contains the cycle on the vertices $(1,1),(2,1),(2,2),(3,2),(3,3),\dots,(t+1,t),(t+1,t+1),(1,t+1)$ in that order, which is $\AC_{2t+2}$ as desired.
\end{proof}

Now, we are ready to prove \cref{Thm:Alternating cycles are polynomial}.
\begin{proof}[Proof of \cref{Thm:Alternating cycles are polynomial}]
  By \cref{thm: Constructible by Duplication is Polynomial}, it suffices to show that $\AC_t$ is constructible by finite iteration of row/column-bridging from the graph of one vertex.\\

  By row-bridging on a single vertex, we obtain a grid subgraph on $2$ rows and $1$ column with an edge. Then, column-bridging and the addition of edge $\{(1,1),(2,1)\}$ yields a grid subgraph on $2$ rows and $2$ columns with edges $\{(1,2),(1,1)\},\{(1,1),(2,1)\},\text{ and }\{(2,1),(2,2) \}$. Subsequently, bridging row $1$ and adding edge $\{(1,1),(1,3)\}$ yields a grid subgraph on $3$ rows and $2$ columns which contains the cycle on the vertices $(1,1),(2,1),(2,2),(2,3), (1,3)$ in that order. Then, bridging column $2$ and adding edge $\{(2,2),(3,2)\}$ yields a grid subgraph on $3$ rows and $3$ columns which contains the cycle on the vertices $(1,1),\allowbreak(2,1),\allowbreak(2,2),\allowbreak(3,2),\allowbreak(3,3),\allowbreak(3,1)$ in that order, which is $\AC_6$. Therefore, by iterating bridging four times, we obtain $\AC_6$ from the graph of one vertex.

  By applying \cref{lem: \AC_t+2 from \AC_t} $(t-6)/2$ times, we see that $\AC_t$ can be created by iterating bridging $t-2 < \infty$ times from the graph of one vertex, as desired.
\end{proof}

To demonstrate one consequence of \cref{thm: Constructible by Duplication is Polynomial}, we now show that $\pgr$ is closed under \emph{generalized subdivisions}, where we replace an edge $uv$ with any connected graph that lies solely within the row or column of $uv$.

\begin{definition}[Generalized subdivision]\label{def: generalized subdivision}
  Let $H$ be a grid subgraph on $c$ columns and $r$ rows with vertex set $V(H) \subseteq [c] \times [r]$, and let $e = \{(x, y),(x', y)\} \in E(H)$. A grid subgraph $H'$ on $c + m$ columns (for some $m \geq 0$) and $r$ rows is \textit{a generalized subdivision of $H$ at $e$} if
  \begin{itemize}
    \item $V(H') = V(H) \cup V^*$ where $V^* = \{(i, y): c < i \leq c + m\}$
    \item $E(H) \subseteq E(H')$
    \item $H'[V^* \cup \{(x, y), (x', y)\}]$ is a connected graph
    \item For all $v \in V^*$, $\nhoriz(v) \subseteq V^* \cup \{(x, y), (x', y)\}$.
  \end{itemize}
  
  If $e$ is instead a vertical edge (i.e. $e = \{(x, y),(x, y') \}$), we define a generalized subdivision analogously with $V(H') \subseteq [c] \times [r + m]$.
\end{definition}

We proceed with \cref{lem: Generalized Subdivision}, which states that $\pgr$ is closed under generalized subdivisions. 

\begin{lemma}[Generalized subdivision]\label{lem: Generalized Subdivision}
  Let $H$ be a grid subgraph on $c$ columns and $r$ rows with $\gr(H,K_k) = k^{O_H(1)}$. Further suppose $e \in E(H)$ and $H'$ is a grid subgraph formed by generalized subdivision of $H$ at $e$. Then $\gr(H', K_k) = k^{O_{H'}(1)}$.
\end{lemma}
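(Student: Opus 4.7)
The plan is to construct a grid subgraph $H^*$ from $H$ by a constant number of bridging operations such that $H' \subseteq H^*$, and then appeal to \cref{Cor: col/row Dup GridGraph} together with \cref{prop: supersaturation} to deduce $\gr(H^*, K_k) = k^{O(1)}$. Since $H' \subseteq H^*$ would imply $\gr(H',K_k) \le \gr(H^*,K_k)$, this yields the desired polynomial bound.

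Without loss of generality, assume $e = \{(x,y),(x',y)\}$ is horizontal (the vertical case being symmetric). Starting from $H = H^*_0$, for each $i = 1,\dots,m$ let $H^*_i$ be the result of bridging column $x$ of $H^*_{i-1}$ at row $y$; this introduces new columns $c+1,\dots,c+m$. The central claim is that in $H^* := H^*_m$, the $m+2$ vertices $(x,y),(x',y),(c+1,y),\dots,(c+m,y)$ induce a clique within row $y$. Indeed, by the definition of bridging, each newly added vertex $(c+i,y)$ acquires an edge to $(x,y)$ (the explicit bridge edge) together with edges inherited from every horizontal neighbor of $(x,y)$ at row $y$ in $H^*_{i-1}$. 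These neighbors include $(x',y)$ (a horizontal neighbor of $(x,y)$ in $H$ because $e \in E(H)$) and each $(c+j,y)$ for $j < i$ (via the bridge edges added in previous steps). Induction on $i$ then shows that all pairs in the claimed set are adjacent in $H^*$.

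Given the clique, the containment $H' \subseteq H^*$ follows from the identity embedding $(\varphi_c,\varphi_r) = (\mathrm{id},\mathrm{id})$: the subgraph $J := H'[V^* \cup \{(x,y),(x',y)\}]$ is a connected graph on $m+2$ vertices, so $J$ is a subgraph of $K_{m+2}$, and every edge of $H'$ (either in $E(H)$ or inside $J$) is present in $H^*$. To finish, \cref{prop: supersaturation} converts $\gr(H,K_k) = k^{O_H(1)}$ into a supersaturation lower bound on $t_g(H,G)$, and iterating \cref{Cor: col/row Dup GridGraph} a constant number of times, with a controlled degradation of constants at each step, preserves supersaturation throughout the $m$ bridgings. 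This yields $\gr(H^*, K_k) = k^{O_{H'}(1)}$ and hence $H' \in \pgr$.

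The main subtlety will be verifying the clique claim: one must carefully track how horizontal edges propagate through successive bridgings, in particular that each new bridge vertex $(c+i,y)$ immediately becomes a horizontal neighbor of $(x,y)$ and is therefore inherited by every later bridging. One might instead be tempted to add each new vertex carrying only the edges of a spanning tree of $J$, but this would not guarantee the non-tree edges of $J$; it is essential that we repeatedly bridge the \emph{same} column $x$ so that the new vertices accumulate into a clique large enough to contain any connected graph on $m+2$ vertices.
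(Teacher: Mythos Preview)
Your proof is correct and follows essentially the same strategy as the paper: build $H^*$ from $H$ via $m$ column-bridgings so that the $m+2$ vertices $(x,y),(x',y),(c+1,y),\dots,(c+m,y)$ in row $y$ form a clique containing the connected graph $J$, whence $H'\subseteq H^*$. The only cosmetic difference is that you repeatedly bridge column $x$ while the paper bridges the most recently created column at each step; both variants yield the same $K_{m+2}$, and your explicit use of \cref{prop: supersaturation} followed by iterated \cref{Cor: col/row Dup GridGraph} is precisely the mechanism underlying the paper's closing appeal to \cref{thm: Constructible by Duplication is Polynomial}.
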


\begin{proof}
  Assume without loss of generality that $e = \{(x, y),(x', y)\}$ is a horizontal edge and that the subdivision of $e$ to form $H'$ includes the addition of $m \geq 1$ vertices in row $y$ (note $m = 0$ is a trivial subdivision and implies $H' = H$). 
  We claim that there exists a sequence of grid subgraphs $H =: H_0^* \subseteq H_1^* \subseteq \cdots \subseteq H_m^* = :H^*$ formed by $m$ row-bridging steps such that $H^* \supseteq H'$. First, we form $H_1^*$ by bridging column $x$ at row $y$ to form column $c + 1$ and add edge $\{(x, y),(c + 1, y)\}$. For $2 \leq i \leq m$, we form $H_i^*$ by bridging column $c + i$ at row $y$ to form column $c + i + 1$ and add the edge $\{(c + i, y),(c + i + 1, y)\}$.

  It remains to show that $H' \subseteq H^*$. Clearly $V(H^*) \supseteq V(H)$ and $E(H^*) \supseteq E(H)$. Taking $V^*$ as defined in \cref{def: generalized subdivision}, it remains to check that $H'[V^* \cup \{(x, y), (x', y)\}] \subseteq H^*[V^* \cup \{(x, y), (x', y)\}]$. Here we make the observation that $H^*[V^* \cup \{(x, y), (x', y)\}] \cong K_{m + 2}$. Moreover, it can be readily seen by induction that $\tilde H_i^* := H_i^*[\{(x, y), (x', y), (c + 1, y), \dots, (c + i, y)\}] \cong K_{i + 1}$. The base case is clear as for $H_0^* = H$ we have by assumption that $e = \{(x, y), (x', y)\}$ is an edge in the graph. Now, suppose the statement holds for some $0 \leq i \leq m - 1$ and consider the induced subgraph $\tilde H_{i+ 1}^* = H_{i + 1}^*[\{(x, y), (x', y), (c + 1, y), \dots, (c + i + 1, y)\}]$. By the inductive hypothesis, it suffices to check that the vertex $(c + i + 1, y)$ is adjacent to all other vertices in $\tilde H_{i+1}^*$. As this vertex was formed by bridging column $c + i$, we have $N_{\tilde H_{i + 1}^*}(c + i + 1, y) \supseteq N_{\tilde H_i^*}(c + i, y) = \{(x, y), (x', y), (c + 1, y), \dots, (c + i - 1, y)\}$. Finally, we have $\{(c + i, y), (c + i + 1, y)\} \in E(\tilde H_{i + 1}^*)$ by the bridging at row $y$. Then as $H^*$ is formed by finitely many row-bridging operations, the proof is complete by \cref{thm: Constructible by Duplication is Polynomial}.
\end{proof}

With generalized subdivision and further use of \cref{thm: Constructible by Duplication is Polynomial}, we can now prove \cref{thm: Simple Cycles are polynomial}.

\begin{proof}[Proof of \cref{thm: Simple Cycles are polynomial}]
  Let $H$ be a simple cycle on vertices $(x_1,y_1),(x_2,y_2),\dots,(x_\ell,y_\ell)$ in that order. Then, by definition of simple cycle, we can find $i_1,i_2,\dots,i_m \in [\ell]$ where $(x_{i_1},y_{i_1}),(x_{i_2},y_{i_2}),\dots,(x_{i_m},y_{i_m})$ forms the alternating cycle $AC_{m}$ and for each $j \in [\ell]$, either $j \in \{ i_1,i_2,\dots,i_m \}$, $x_j = x_{i_\rho} = x_{i_{\rho+1}}$ for some $\rho \in [m]$ with $m+1 := 1$, or $y_j = y_{i_\rho} = x_{i_{\rho+1}}$ for some $\rho \in [m]$ with $m+1 := 1$. 
  Thus, $H$ can be obtained from $AC_m$ by a series of generalized edge subdivisions, and since $\gr(AC_m, K_k) = k^{O_m(1)}$ by \cref{Thm:Alternating cycles are polynomial}, then \cref{lem: Generalized Subdivision} implies $\gr(H,K_k) = k^{O_H(1)}$.
\end{proof}

\section{Simple Trees and $\AC_6$}\label{sec: strong bounds}


In this section, we show linear bounds on the grid Ramsey number of simple trees, and cubic bounds on the grid Ramsey number of $AC_6$. It would be interesting to understand the polynomial order of $\gr(H, K_k)$ for other $H$ in $\pgr$. 

We first prove \cref{thm:tree linear}. To do this, we first define the notion of an $n$-diverse vertex for a given tree type $T$ in a grid subgraph $G$. Roughly speaking, a vertex $v$ is $n$-diverse if there are $n$ copies of $T$ containing $v$, mutually disjoint except at $v$.
\begin{definition}[$n$-diverse]
Let $T$ be a simple tree, $v=(s, t)$ be a vertex of $T$, and $n \geq$ be an integer. In an $N\times N$ grid subgraph $G$, a vertex $(k, \ell)$ of $G$ is \emph{$n$-diverse} for $v\in T$ if there exist embeddings $\varphi_i = (\varphi_c^{(i)},\varphi_r^{(i)}), 1 \leq i \leq n$ from $T$ to $G$ such that
\begin{itemize}
\item $\varphi_c^{(i)}(s)=k,\varphi_r^{(i)}(t)=\ell$ for all $i$,
\item For all $i_1 \neq i_2$ and all $j_1, j_2$, $\varphi_c^{(i_1)}(j_1) = \varphi_c^{(i_2)}(j_2) \implies j_1 = j_2 = s$, and
\item For all $i_1 \neq i_2$ and all $j_1, j_2$, $\varphi_r^{(i_1)}(j_1) = \varphi_r^{(i_2)}(j_2) \implies j_1 = j_2 = t$.
\end{itemize}
When $v,T$ are clear we simply write that $(s,t)$ is $n$-diverse.
\end{definition}

\cref{thm:tree linear} is implied by the following lemma when $n=1$ and $v$ is any vertex of $T$.
\begin{lemma}
Let $c$ be a constant, $T$ be a simple tree, and $v$ be a vertex of $T$.
Consider an $N\times N$ grid subgraph $G$ with no coclique of size $k$. Then there is a constant $c'$ dependent on $c,v,T$ such that at least $N^2-c'Nk$ vertices of $G$ are $c$-diverse for $v\in T$.
\end{lemma}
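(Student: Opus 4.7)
I plan to prove the lemma by induction on $|V(T)|$. The base case $|V(T)| = 1$ is immediate: every vertex admits $c$ identical embeddings $v \mapsto (k_0,\ell_0)$, and the disjointness conditions are vacuous since $T$ has only one row and one column. For $T$ a single edge, say WLOG vertical, the vertex $(k_0,\ell_0)$ is $c$-diverse iff $\dvert(k_0,\ell_0) \geq c$; by the Caro--Wei / greedy Tur\'an bound applied to column $k_0$ (using $\alpha < k$ per column), at most $kc$ vertices in each column have vertical degree less than $c$, and summing over columns gives $O(Nk)$ bad vertices, as required.

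For the inductive step I will pick a leaf $u \neq v$ of $T$ adjacent to $w$ and set $T' := T - u$; simplicity of $T$ is preserved under leaf removal. Assume without loss of generality that the edge $uw$ is horizontal. I will apply the inductive hypothesis to $T'$ with a much larger diversity parameter $c^{(m)}$ (to be chosen as a function of $c$ and $|V(T)|$), so that at most $O(Nk)$ vertices fail to be $c^{(m)}$-diverse for $v \in T'$. For each remaining vertex $(k_0,\ell_0)$ with witnessing embeddings $\varphi_1, \ldots, \varphi_{c^{(m)}}$, I will try to select $c$ of them and extend each to a $T$-embedding by choosing an image for $u$ among the horizontal neighbors of $w_i := \varphi_i(w)$ in a column not already used by the selected embeddings' non-$v$-column vertices. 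The total forbidden column count is at most $D := c \cdot |V(T)|$, so extension of $\varphi_i$ succeeds whenever $w_i$ has horizontal degree greater than $D$.

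Let $B := \{x \in V(G) : \dhoriz(x) \leq D\}$. By Caro--Wei in each row, $|B \cap (\text{row } r)| \leq k(D+1)$, so $|B| = O(Nk)$. When $w$ shares $v$'s row or column in $T'$, all of the $w_i$ lie in a single row or column of $G$; applying Caro--Wei inside the induced subgraph of that row or column on $V(G) \setminus B$ (which still has $\alpha < k$) bounds the number of $(k_0,\ell_0)$ with fewer than $c$ neighbors in $V(G) \setminus B$ by $O(Nk)$, mirroring the case analysis one would make for $P_3$: vertices in $B$ themselves contribute $O(Nk)$, and the remaining ``type B'' failures are controlled by Caro--Wei on the restricted graph.

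The subtle case, and the main obstacle, is when $w$ lies in neither $v$'s row nor $v$'s column of $T'$: then the $w_i$ spread across $c^{(m)}$ distinct rows and columns of $G$, and the purely size-based estimate $|B| = O(Nk)$ does not rule out every $w_i$ landing in $B$. I plan to handle this by strengthening the inductive hypothesis to demand not merely $c$-diversity but $c$-diversity in which the image of a designated marked vertex of $T$ avoids a prescribed subset $B^* \subseteq V(G)$ that is sparse per row (and symmetrically per column), matching the structure of $B$. Since any such $B^*$ has at most $O(k)$ vertices per row, the restricted-graph Caro--Wei argument keeps the per-level failure count at $O(Nk)$, and the avoidance parameter propagates cleanly through the induction: when we peel $u$, the new marked vertex for the $T'$-induction is the unique $T$-neighbor of whatever will be extended at the next level, and the resulting augmented ``bad'' set for $T'$ retains the same per-row sparsity. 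Tracking all constants through the finite induction on $|V(T)|$ yields the desired $c' = c'(c,v,T)$.
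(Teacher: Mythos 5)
Your base case and your instinct to induct on $|V(T)|$ match the paper, but your decomposition (peel a leaf $u\neq v$ and extend a single embedding of $T'=T-u$ by one vertex) is genuinely different from the paper's, and the obstacle you flag in your third paragraph is fatal to it as proposed. The proposed repair --- a strengthened hypothesis in which a marked vertex avoids a prescribed per-row-sparse set $B^*$ --- does not close the gap. First, a single marked vertex is not enough: in the nested induction several attachment points may simultaneously need to avoid bad sets, and the newly added leaf $u$ may itself be the attachment point of a leaf peeled earlier, so its image must also be chosen outside a sparse set. Second, and more seriously, the quantitative bookkeeping fails: if the image of $u$ must avoid a set with $s$ vertices per row, then $\varphi_i(w)$ needs horizontal degree greater than $s+O(1)$, and by Tur\'an the set of vertices failing that has up to $k(s+O(1))$ vertices per row. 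So the sparsity parameter gets multiplied by $k$ at each level of the induction, and after $|V(T)|$ levels you lose $O(Nk^{|V(T)|})$ vertices, not $O(Nk)$. (Even your ``easy case'' has this issue when $w\neq v$ shares $v$'s row: the $w_i$ are $c^{(m)}$ specific vertices of that row, and with $c^{(m)}$ a constant you cannot guarantee that $c$ of them escape a bad set of size $\Theta(k)$ in that row.)

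The paper sidesteps all of this by splitting $T$ at an edge $\{v,v_2\}$ \emph{incident to the designated vertex} $v$, rather than at a leaf. Deleting that edge gives subtrees $T_1\ni v$ and $T_2\ni v_2$, and one asks for base vertices $w$ that are $100c|T|$-diverse for both $v\in T_1$ and $v_2\in T_2$ \emph{and} have horizontal degree at least $c$ into the set of such doubly-diverse vertices. The crucial point is that the only degree condition ever imposed falls on the base vertex $w$ itself (a constant threshold $c$, costing at most $ck$ failures per row by Tur\'an applied inside the induced subgraph of diverse vertices), while everything at the neighbors $u_1,\dots,u_c$ is handled by their own high diversity for $v_2\in T_2$: one greedily assembles $c$ copies of $T$ by repeatedly choosing, among the $100c|T|$ witnessing embeddings at $w$ and at $u_i$, ones that avoid the boundedly many rows and columns already used. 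No internal vertex of any embedding is ever required to have large degree, which is exactly what keeps the loss linear in $k$. I would recommend restructuring your induction around this edge-split at $v$; as it stands, your argument would at best yield $N^2-O(Nk^{|V(T)|})$, hence $\gr(T,K_k)=O(k^{|V(T)|})$ rather than the claimed linear bound.
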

\begin{proof}
We induct on $|T|$, with base case being $|T|=1$. In this case, every vertex of $G$ is $c$-diverse for any $c$ by letting all embeddings be the one that sends $T$ to that vertex. Hence, $c'=0$ works.

For the inductive step, let $v_2$ be a vertex of $T$ adjacent to $v$; without loss of generality we assume the edge $\{v, v_2\}$ is horizontal. Then deleting the edge $\{v,v_2\}$ gives a simple tree $T_1$ with $v$ and a simple tree $T_2$ with $v_2$ such that $T_1\cup T_2=T-\{v,v_2\}$. We look for vertices in $G$ which
\begin{itemize}
\item are $100c|T|$-diverse for $v\in T_1$ and $v_2\in T_2$, and
\item have horizontal degree at least $c$ to other vertices satisfying the first bullet. 
\end{itemize}
By the inductive hypothesis, there are constants $c_1,c_2$ such that $N^2-c_1Nk$ vertices are $100c|T|$ diverse for $v\in T_1$ and $N^2-c_2Nk$ vertices are $100c|T|$-diverse for $v_2\in T_2$. Thus, at least $N^2-(c_1+c_2)Nk$ vertices are $100c|T|$ diverse for both.

In the induced subgraph of $G$ with only vertices that are $100c|T|$ diverse for both $v\in T_1$ and $v\in T_2$, there are at most $ck$ vertices per row which have horizontal degree less than $c$. This is because if there are $ck$ vertices in a row with horizontal degree less than $c$, Tur\'an's theorem implies there is a coclique of size $k$ among these $ck$ vertices. Hence, there are at least $N^2-(c_1+c_2+c)Nk$ vertices satisfying the conditions above.

Now we show any vertex satisfying the two bulleted properties is $c$-diverse for $v\in T$. Let $w\in G$ be such a vertex, with horizontal neighbors $u_1,u_2,\dots, u_c$ which are also $100c|T|$-diverse for $v\in T_1$ and $v_2\in T_2$. Let $i$ be the largest integer ($1$ is allowed) for which there are embeddings $T^1,T^2,\dots, T^{i-1}$ of $T$ such that
\begin{itemize}
  \item for all $j<i$, $v,v_2$ are sent to $w,u_j$ respectively in the embedding that sends $T$ to $T^j$,
  \item if $j\neq \ell$ then $T^j$ and $T^\ell$ share no rows/columns except for the row/column of $w$, and
  \item for all $j<i$ and $\ell\le c$, $T^j$ and $u_\ell$ share no rows/columns except for the row of $w$.
\end{itemize}

If $i=c+1$ then we are done. If $i\le c$, we can show a contradiction. Because $w$ is $100c|T|$-diverse for $v\in T_1$, off the $100c|T|$ embeddings that make $w$ $100c|T|$-diverse, except for the row/column of $w$, every row/column of $G$ has part of at most $1$ embedding. Since $T^1,\dots, T^{i-1},u_i,u_{i+1},\dots, u_c$ inhabit at most $2c|T|$ rows/columns in total, we can find an embedding $T^{i}_1$ of $T_1$ where $v\in T_1$ is sent to $w$ such that $T^i_1$ shares no row/column with the other $T^j$ or $u_\ell$ except for the row/column of $w$. We can similarly find an embedding $T^i_2$ of $T_2$ where $v_2\in T_2$ is sent to $u_i$ such that $T^i_2$ shares no row/column with $T^i_1$, $T^j$ for $j<i$, and $u_\ell$ for $\ell\neq i$ except for the row of $w$. Then letting $T^i$ be the union of $T^i_1$, $T^i_2$, and the edge from $w$ to $u_i$ suffices.
\end{proof}

For our final unconditional result, we prove polynomial bounds on the grid Ramsey number of $\AC_6$, which is not linear. Write $R(3,k)$ for the classical graph Ramsey number of a triangle against a $k$-clique, which is known to be of order $\Theta(k^2/ \log k)$, by Kim \cite{Kim1995R3t} and Ajtai-Koml\'os-Szemer\'edi \cite{ajtai_note_1980} and had lower bound recently improved to $(\frac{1}{3} + o(1))\frac{k^2}{\log k}$ and subsequently $(\frac{1}{2} + o(1))\frac{k^2}{\log k}$ by Campos, Jenssen, Michelen, and Sahasrabudhe \cite{campos2023newlowerboundsphere} and Hefty, Horn, King, and Pfender \cite{hefty2025improvingr3kjustbites}.

\begin{theorem}\label{Thm: Bounds on Ell}
$R(3,k)\le \gr(\AC_6,K_k)\le c'k^3$ for some constant $c'$.
\end{theorem}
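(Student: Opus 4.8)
This is an explicit construction. Put $N=R(3,k)-1$ and let $F$ be a triangle-free graph on $[N]$ with $\alpha(F)<k$, which exists by the definition of $R(3,k)$. Build a spanning grid subgraph $G\subseteq G_{N\times N}$ by placing a horizontal edge $\{(x,y),(x',y)\}$ in $E(G)$ exactly when $\{x,x'\}\in E(F)$, and placing \emph{every} vertical edge in $E(G)$. In each column $G$ is complete, so no coclique lies in a column; in each row the complement of $G$ is isomorphic to $\overline F$, whose clique number is $\alpha(F)<k$, so no coclique lies in a row. On the other hand a copy of $\AC_6$ in $G$ occupies three distinct columns $p,q,r$, and its three horizontal edges join the pairs $\{p,q\},\{q,r\},\{r,p\}$ inside three distinct rows; as horizontal adjacency in every row is exactly $F$, this forces a triangle $\{p,q,r\}$ in $F$, which is impossible. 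Hence $\gr(\AC_6,K_k)>N$, that is, $\gr(\AC_6,K_k)\ge R(3,k)$.

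\textbf{Upper bound, setup.} Take $N=c'k^3$ for a large absolute constant $c'$ and let $G\subseteq G_{N\times N}$ have no $k$-coclique; the goal is to locate a copy of $\AC_6$, i.e.\ columns $p,q,r$, rows $a,b,c$, and six red edges: the horizontal edges $(p,a)(q,a)$, $(q,b)(r,b)$, $(r,c)(p,c)$ and the vertical edges $(q,a)(q,b)$, $(r,b)(r,c)$, $(p,c)(p,a)$. For columns $x\ne x'$ write $W(x,x')=\{y:\{(x,y),(x',y)\}\in E(G)\}$ and call $\{x,x'\}$ \emph{friendly} if $|W(x,x')|\ge N/(6k)$; let $\mathcal F$ be the graph of friendly pairs on $[N]$. \emph{Key claim: $\alpha(\mathcal F)<2k$.} If $X$ were $\mathcal F$-independent with $|X|=2k$, write $e_y(X)$ for the number of red edges among the cells $\{(x,y):x\in X\}$; double counting pairs,
\[
\sum_{y}e_y(X)=\sum_{\{x,x'\}\subseteq X}|W(x,x')|<\binom{2k}{2}\cdot\frac{N}{6k}<\frac{kN}{3}.
\]
But no row contains a $k$-coclique, so Tur\'an's theorem gives $e_y(X)\ge \frac{|X|(|X|-k+1)}{2(k-1)}=\frac{k(k+1)}{k-1}>k$ for every $y$, whence $\sum_y e_y(X)>kN$, a contradiction (the same holds with rows and columns exchanged). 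Since $N\ge c'k^3\ge R(3,2k)$, the graph $\mathcal F$ contains a triangle $\{p,q,r\}$ — three pairwise friendly columns — so $|W(p,q)|,|W(q,r)|,|W(r,p)|\ge N/(6k)\ge c'k^2/6$.

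\textbf{Completing the cycle — the crux.} What remains is to choose distinct rows $a\in W(p,q)$, $b\in W(q,r)$, $c\in W(r,p)$ such that column $q$ contains the red edge $\{a,b\}$, column $r$ contains $\{b,c\}$, and column $p$ contains $\{c,a\}$: a ``rainbow triangle'' on rows meeting the three large sets $W(\cdot,\cdot)$, with one edge taken from each of the vertical red graphs $G_p,G_q,G_r$ (each with independence number $<k$). The plan is to fix a row $b\in W(q,r)$ for which $N_{G_q}(b)\cap W(p,q)$ and $N_{G_r}(b)\cap W(r,p)$ are both large relative to the $W$-sets, then find $a,c$ in those neighbourhoods joined by a red edge of $G_p$ — invoking the no-$k$-coclique hypothesis inside column $p$ one last time, or extracting a $k$-coclique there if it fails. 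I expect this to be the main obstacle. The friendship lemma alone only requires $N\gtrsim k^2$; the role of the cubic bound is precisely that the $W$-sets then have size $\gtrsim k^2\gtrsim R(3,k)$, which should supply just enough slack to force the rainbow triangle. The difficulty is that the red graphs $G_p,G_q,G_r$ may be structured so that $W(p,q)$ and $W(q,r)$ fall into disjoint cliques of $G_q$ (killing one orientation of one triple), so the honest argument must cycle through the relabellings of $p,q,r$, pass to a clique of size $4$ in $\mathcal F$ (available since $N\ge R(4,2k)$) for extra room, and convert any residual failure into a coclique. Making this go through rigorously — rather than heuristically — is where the real work lies, and it is the step at which the bound degrades from $R(3,k)\asymp k^2/\log k$ to $O(k^3)$.
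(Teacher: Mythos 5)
Your lower bound is correct and is essentially identical to the paper's: the same construction $F\,\square\,K_N$ with $F$ a triangle-free graph on $R(3,k)-1$ vertices with independence number below $k$.

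The upper bound, however, has a genuine gap, and you have correctly located it yourself: the ``completing the cycle'' step is a plan, not a proof. Your preliminary work is sound --- the double-counting/Tur\'an argument showing the friendship graph $\mathcal F$ has independence number below $2k$ is correct, and $N\ge R(3,2k)$ does yield three pairwise friendly columns $p,q,r$ with $|W(p,q)|,|W(q,r)|,|W(r,p)|\ge N/(6k)$. But from that point the obstruction you name is real and is not circumvented by the sketch: the vertical graph of column $q$ restricted to $W(p,q)\cup W(q,r)$ can have all of its edges inside the two sets and none between them (e.g.\ two disjoint cliques), so the hypothesis ``no $k$-coclique in column $q$'' gives you no edge of the required type $a\in W(p,q)$, $b\in W(q,r)$. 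Large $W$-sets and small independence number per column simply do not force the rainbow triangle of rows, and passing to a $K_4$ in $\mathcal F$ or cycling through relabellings of $p,q,r$ does not obviously repair this; you would need a substantively new idea here.

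The paper's proof avoids this trap by anchoring differently. It first shows (via Tur\'an in each row and column) that at least $N^2/3$ vertices have both horizontal and vertical degree at least $N/(3k)$, hence at least $N^4/(27k^2)$ quadruples $(x_1,y_1,x,y)$ with edges $\{(x_1,y_1),(x_1,y)\}$ and $\{(x_1,y_1),(x,y_1)\}$. Pigeonholing fixes a single row $y$ and single column $x$ with at least $N^2/(27k^2)$ ``corners'' $(x_1,y_1)$ attached to both. The two missing edges of the $\AC_6$ are then a horizontal edge inside row $y$ between the $y$-shadows of two corners sharing a row, and a vertical edge inside column $x$ between the $x$-shadows of two corners sharing a column; a corner lacking such a partner in either direction contributes to a coclique in row $y$ or column $x$, so at most $2N(k-1)$ corners fail, which is fewer than $N^2/(27k^2)$ when $N=c'k^3$. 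The crucial structural difference is that the connecting edges from the corners to the fixed row and column are built into the definition of a corner, so the only edges still to be found live inside one fixed row and one fixed column, where the no-coclique hypothesis applies directly. If you want to salvage your write-up, you should replace the ``friendly triangle'' step with an argument of this shape rather than trying to force the rainbow triangle.
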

\begin{proof}
For the lower bound, let $N=R(3,k)-1$. There is a graph $G$ on $N$ vertices which has no independent set of size $k$ and no triangle. The grid subgraph $G\square K_{N}$ is a grid subgraph on $N$ rows and $N$ columns, we can show it has no $\AC_6$ and no coclique of size $k$.

Suppose $(\varphi_c,\varphi_r)$ is an embedding of $\AC_6$ into $G$. Then there is an edge $\{(\varphi_c(1),\varphi_r(1)),(\varphi_c(2),\varphi_r(1)\}$. Thus, in $G$ the edge $\{\varphi_c(1),\varphi_c(2)\}$ exists. Similarly $\{(\varphi_c(2),\varphi_c(3))\}$ and $\{(\varphi_c(3),\varphi_c(1))\}$ also exist in $G$, so $G$ has a triangle. Thus, $G\square K_{N}$ has no $\AC_6$.

We have that $G\square K_N$ has no horizontal coclique of size $k$ because then $G$ would also have a horizontal coclique of size $k$. We also have that $G\square K_N$ has no vertical coclique of size $k$ because all vertical edges are present, proving the lower bound.

For the upper bound, let $N=ck^3$ for some sufficiently large constant $c$, and suppose there is an $N\times N$ grid subgraph with no coclique of size $k$. Then in each row, consider the $\frac{N}{3}$ vertices with lowest degree. By Tur\'an's theorem on these vertices, there is a vertex among these with horizontal degree at least $\frac{N}{3k}$. Thus, $\frac{2N}{3}$ of the vertices in any row have horizontal degree at least $\frac{N}{3k}$. Similarly, $\frac{2N}{3}$ of the vertices in any column have vertical degree at least $\frac{N}{3k}$. By the union bound, at least $\frac{N^2}{3}$ vertices have both horizontal degree and vertical degree at least $\frac{N}{3k}$. Thus, there are at least $\frac{N^4}{27k^2}$ ordered quadruples $(x_1,y_1,x,y)$ where the edges $\{(x_1,y_1),(x_1,y)\}$ and $\{(x_1,y_1),(x,y_1)\}$ exist.

By the pigeonhole principle, there exists a row $y$ and column $x$ with $\frac{N^2}{27k^2}$ vertices $(x_1,y_1)$ with the edges $\{(x_1,y_1),(x_1,y)\}$ and $\{(x_1,y_1),(x,y_1)\}$ present. Call these \emph{corners}. Between two corners $(x_1,y_1)$ and $(x_2,y_1)$ in the same row, draw an ``auxiliary edge" between them if the edge $\{(x_1,y),(x_2,y)\}$ exists. Similarly, for two corners $(x_1,y_1)$ and $(x_1,y_2)$ in the same column, draw an auxiliary edge between them if the edge $\{(c,r_1),(c,r_2)\}$ exists.

In a row $y_1$, suppose $k$ of its corners, $(x_1,y_1),(x_2,y_1),\dots, (x_k,y_1)$ don't have any horizontal auxiliary edges incident to them. Then we obtain a coclique of size $k$ from the vertices $(x_i,y)$. Thus, less than $k$ corners in each row have horizontal auxiliary degree $0$, so at most $N(k-1)$ corners have no horizontal auxiliary edges. By repeating this argument for the columns, we get at most $2N(k-1)$ corners have no horizontal auxiliary edge or no vertical auxiliary edge. Since $c'$ is large enough, we have $\frac{N^2}{27k^2}>2N(k-1)$, so there exists a corner $(x_1,y_1)$ with positive horizontal and vertical auxiliary degree. Thus, there are $x_2$ and $y_2$ such that $(x_2,y_1)$ and $(x_1,y_2)$ are corners and the edges $\{(x_1,y),(x_2,y)\}$ and $\{(x,y_1),(x,y_2)\}$ exist. We obtain an $\AC_6$ into our grid from the embedding $(\varphi_c,\varphi_r)$ where $\varphi_c(1)=x_1,\varphi(2)_c=x_2,\varphi_c(3)=x$ and $\varphi_r(1)=y,\varphi_r(2)=y_1,\varphi_r(3)=y_2$, as desired.
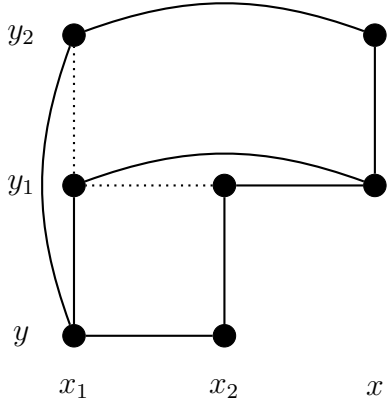
\begin{figure}[H]
 \centering
\begin{tikzpicture}[scale=2, mystyle/.style={circle, draw, fill=black, inner sep=3pt}]
 \node[mystyle] (A) at (1,0) {};
 \node[mystyle] (B) at (0,0) {};
 \node[mystyle] (C) at (2,1) {};
 \node[mystyle] (D) at (1,1) {};
 \node[mystyle] (E) at (0,1) {};
 \node[mystyle] (F) at (2,2) {};
 \node[mystyle] (G) at (0,2) {};
 \node (col1) at (0,-0.35) {\large $x_1$};
 \node (col1) at (1,-0.35) {\large $x_2$};
 \node (col1) at (2,-0.35) {\large $x$};
 \node (col1) at (-0.35,2) {\large $y_2$};
 \node (col1) at (-0.35,1) {\large $y_1$};
 \node (col1) at (-0.35,0) {\large $y$};

 \draw[black,thick] (A) -- (B);
 \draw[black,thick] (C) -- (D);
 \draw[black,thick] (C) to[out=-200,in=20] (E);
 \draw[black,thick,dotted] (D) -- (E);
 \draw[black,thick] (F) to[out=-200,in=20] (G);
 \draw[black,thick] (F) -- (C);
 \draw[black,thick] (D) -- (A);
 \draw[black,thick,dotted] (E)--(G);
 \draw[black,thick] (B) -- (E);
 \draw[black,thick] (B) to[out=110, in=250] (G);
\end{tikzpicture}
\label{fig:findell}
\caption{Dotted edges denote auxiliary edges.}
\end{figure}
\end{proof}

\section{Relation to Multicolor Erd\H{o}s-Hajnal Conjecture} \label{sec: multicolor conjecture}

All theorems in this section are dependent on the Multicolor Erd\H{o}s-Hajnal conjecture, \cref{conj:meh}. The conjecture is well-known to have implications to Ramsey theory; for example Conlon, Fox, and R\"odl \cite{conlon2015hedgehogsarenotcolorblind} showed that a particular case is directly equivalent to the existence of a polynomial upper bound for the 3-color Ramsey number of the Hedghehog graph $R_3(H_3; 3)$. While some partial results are known (see for example section 3.3 of Conlon, Fox, Sudakov \cite{conlon2015recentdevelopments} for a known case in which the result holds), the general case remains open.

\begin{lemma}[Multicolor Erd\H{o}s-Hajnal, no aligning edges]\label{MEH lemma}
Assume the conclusion of the Multicolor Erd\H{o}s-Hajnal conjecture.
Let $c$ and $r$ be two fixed positive integers. Let $H$ be a fixed grid subgraph on $c$ columns and $r$ rows of only horizontal edges such that no two edges are between the same two columns. Then there exists an $M$ such that if $N>k^M$, there exists either a horizontal coclique of size $k$ or a copy of $H$ in any $r\times N$ grid subgraph $G$.
\end{lemma}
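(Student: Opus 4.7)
First I would translate the grid problem into an edge-coloring of $K_N$. Define $\chi_G : \binom{[N]}{2} \to 2^{[r]}$ by $\chi_G(a,a') := \{y \in [r] : \{(a,y),(a',y)\} \in E(G)\}$, recording the set of rows in which the pair of columns $a,a'$ is joined in $G$, so $\chi_G$ is an edge coloring with palette $2^{[r]}$ of size $2^r$. The hypothesis that no two horizontal edges of $H$ lie between the same two columns means that the analogous map $\chi_H : \binom{[c]}{2} \to 2^{[r]}$ takes values each of which is a singleton $\{y\}$ or the empty set. Crucially, an injection $\phi:[c]\hookrightarrow[N]$ yields an embedding of $H$ into $G$ if and only if $\chi_G(\phi(i),\phi(j)) \supseteq \chi_H(i,j)$ for every pair $i<j$, so it suffices to find such a $\phi$ or a horizontal coclique of size $k$ in $G$.

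Next I would iterate MEH, peeling off one color at a time. At stage $m$ I maintain a subset $V_m \subseteq [N]$ and a palette $C_m \subseteq 2^{[r]}$ of size at most $m$ such that the restriction of $\chi_G$ to pairs in $V_m$ uses only colors from $C_m$, starting with $V_{2^r} = [N]$ and $C_{2^r} = 2^{[r]}$. The key dichotomy at each stage is: either there exists $y \in [r]$ with $y \notin S$ for every $S \in C_m$, in which case every pair of columns in $V_m$ is non-adjacent in row $y$ and $V_m$ itself is a horizontal independent set in row $y$ (yielding the desired coclique once $|V_m| \ge k$); or for every $y \in [r]$ we can fix a witness $S^y \in C_m$ containing $y$. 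In the latter case, define the target pattern $\chi^{(m)}_* : \binom{[c]}{2} \to C_m$ by $\chi^{(m)}_*(i,j) = S^y$ whenever $\chi_H(i,j) = \{y\}$ and by some fixed $S_0 \in C_m$ on pairs with $\chi_H(i,j) = \emptyset$; by construction any $c$ vertices in $V_m$ realizing $\chi^{(m)}_*$ form an embedding of $H$, since each required row $y$ is contained in the assigned color $S^y$.

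Applying MEH to the coloring of $V_m$ with target $\chi^{(m)}_*$ then delivers either an exact copy of $\chi^{(m)}_*$ (hence of $H$) and we stop, or a subset $V_{m-1} \subseteq V_m$ of size at least $|V_m|^{\eps_m}$ whose induced coloring drops to at most $m-1$ colors, and we update $C_{m-1}$ and recurse. The iteration terminates after at most $2^r-1$ rounds since $|C_m|$ strictly decreases; if it reaches $m=1$ then $C_1 = \{S^*\}$, and either $S^* \ne [r]$ (which puts us back in the first branch of the dichotomy for any $y \notin S^*$) or $S^* = [r]$ (in which case $V_1$ carries every row-edge and any $c$ vertices of $V_1$ embed $H$). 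Setting $\eps := \min_m \eps_m$, positive because the minimum is over finitely many stages and finitely many possible $c$-vertex target patterns, after $2^r-1$ iterations we have $|V_1| \ge N^{\eps^{2^r-1}}$, so taking $M := \lceil \eps^{-(2^r-1)} \rceil$ and $N > k^M$ guarantees $|V_m| \ge k$ throughout, completing the argument.

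The main technical obstacle is choosing the target pattern $\chi^{(m)}_*$ as the palette shrinks; we cannot simply aim for $\chi_H$ itself because the required color $\{y\}$ may have been eliminated from $C_m$, which is why we must substitute a witness supercolor $S^y \supseteq \{y\}$. The no-aligning-edges hypothesis is exactly what makes this substitution conflict-free, since each pair of $H$-columns demands at most one row $y$ and therefore only one witness choice. The observation that makes the whole scheme terminate cleanly is that failure to find a witness for some row $y$ is not really a failure at all: it automatically exhibits the desired horizontal coclique in row $y$.
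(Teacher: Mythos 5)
Your proposal is correct and follows essentially the same route as the paper's proof: both encode the columns as a $2^{[r]}$-edge-coloring of $K_N$, replace each singleton color of $H$ by a witness supercolor containing the required row (which is exactly where the no-aligning-edges hypothesis is used), and iterate MEH to peel off one color per round until either the pattern appears or some row is uncovered, yielding the coclique. The paper packages the iteration as a ``minimal full family'' argument rather than an explicit recursion, but the dichotomy, the quantitative $\eps^{-2^r}$-type bookkeeping, and the termination mechanism are the same.
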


\begin{proof}
Consider the auxiliary graph $G^* \cong K_N$ on $N$ vertices where each vertex corresponds to a column of $G$. Color the edges $\{x_1, x_2\} \in E(G^*)$ by $\chi_G(x_1,x_2)= \{y|\{(x_1,y),(x_2,y)\}\in E(G)\}$. There are $2^r$ possible colors as we allow any subset of $[r]$. Let $H^*$ be the analogous graph for $H$ and let $\chi_H$ be the analogous coloring. In $H^*$, all colors have $0$ or $1$ element.

If there is an injection $\varphi_c$ from $V(H^*)$ to $V(G^*)$ such that for each edge $\{x_1,x_2\}$ in $H$, $\chi_H(x_1,x_2)\subseteq \chi_G(\phi_c(x_1),\phi_c(x_2))$, then we get an embedding of $H$ into $G$ by $(\rm{id}_{[r]},\varphi_c)$. Assume for the sake of contradiction no such $\varphi_c$ exists. Furthermore, assume that $M$ is sufficiently large with respect to $H$ (specified concretely later) and there is no horizontal coclique of size $k$ in $G$. This means that for every $y$ and every subset $U\subset[N]$ of size $k$, there exists $x_1,x_2\in U$ such that $y\in\chi_G(x_1,x_2)$.

Let $\mathcal S$ be any subset of $2^{[r]}$ such that $\bigcup_{S \in \mathcal S}  S=[r]$. Call such an $\mathcal{S}$ \textit{full}. Then there is a function $\phi_{\mathcal S}$ from $[r]$ to $\mathcal S$ such that for all $y\in [r]$, $y\in \phi_{\mathcal S}(y)$. By Multicolor Erd\H{o}s-Hajnal, there exists $\eps_{\mathcal S}>0$ such that for any $n$ and any coloring $\chi : E(K_n)\to \mathcal S$ either $\chi$ contains a copy of $\phi_{\mathcal S}\circ \chi_H$ or there is a subset $U\subseteq [n]$ of size $n^{\eps_{\mathcal S}}$ and a color $T\in \mathcal S$ such that for all $u_1,u_2\in U$, $\chi (u_1,u_2)\neq T$.


Let $\eps=\min\{\eps_\mathcal S|\mathcal S\text{ full}\}$. Since $M$ is sufficiently large with respect to $H$, we have $M>\eps^{-2^r}$. Let $\mathcal{S}_1$ be the subset of $2^{[r]}$ with the fewest elements such that $\mathcal{S}_1$ is full and there exists a subset $W\subseteq [N]$ of $k^{\eps^{-|\mathcal{S}_1|}}$ vertices of $G^*$ where $\chi_G(W)\subseteq \mathcal{S}_1$. Such a smallest family exists because by the construction of $M$, the family $\mathcal{S}=2^{[r]}$ satisfies these properties with $W=V(G^*)$. Then from the above paragraph, since by assumption there is no copy of $\phi_S\circ \chi_H$ inside $W$, there must be some $T\in \mathcal{S}_1$ and $U\subset W$  of size $k^{\eps_{\mathcal{S}_1}\eps^{-|S_1|})}$ vertices such that all edges in $U$ are not colored $T$. Let $S_2=S_1\setminus \{T\}$. Note that $\eps_{\mathcal{S}_1}\eps^{-|\mathcal{S}_1|}>\eps^{1-|\mathcal{S}_1|}=\eps^{-|\mathcal{S}_2|}$. Since $\mathcal{S}_1$ is minimal, then $\mathcal{S}_2$ is not defined, so there is some $y\in [r]$ such that $y\notin \bigcup S_2$. Thus, among the columns of $G$ corresponding to $U$, there is no edge in row $y$. Since $|U|\ge k^{\eps^{-|\mathcal S_2|}}\ge k$, this gives a contradiction, as desired.
\end{proof}

We are ready to prove our main conditional result, \cref{thm:no horiz alignment}.

\begin{proof}
Consider an $N\times N$ grid subgraph $G$ with no coclique of size $k$. By applying \cref{prop: supersaturation} to a vertical $r$-clique, there exists some $\alpha,\beta,\beta'$ such that if $N>ak^\beta$, there are at least $\alpha^{-1}N^{r+1}k^{-\beta'}$ vertical $r$-cliques in $G$. There are $\binom{N}{r}<N^r$ choices of $r$ rows that these $r$-cliques are in, so some set $S$ of $r$ rows has at least $\alpha^{-1}Nk^{-\beta'}$ vertical $r$-cliques which lie in rows of $S$.

From \cref{MEH lemma}, if $N>k^M$ for some $M$ in terms of $H$, restricting to the rows of $S$ and the $r$-cliques within them suffices.
\end{proof}

As stated in the introduction, this verifies \cref{conj:squarefree} if $H$ lies in at most $2$ rows (or columns).

\begin{corollary}
\label{thm: two row grids}Assume the Multicolor Erd\H{o}s-Hajnal conjecture is true.
If $H$ is a square-free grid subgraph on $2$ rows, then $H\in \pgr$.
\end{corollary}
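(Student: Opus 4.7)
The plan is to reduce to \cref{thm:no horiz alignment} via a key structural observation, then add the non-doubled columns of $H$ inductively using supersaturation.

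Structural observation: call a column of $H$ \emph{doubled} if it contains a vertical edge, and let $H_d$ denote the sub-pattern of $H$ induced on the doubled columns. I claim that $H_d$ has no two horizontal edges between the same two columns. Otherwise, two doubled columns $c_1, c_2$ would be connected by horizontal edges in both rows of $H$, and the four vertices $(c_1, 1), (c_2, 1), (c_1, 2), (c_2, 2)$, together with these two horizontal edges and the two vertical edges at $c_1, c_2$ (present by the definition of $D_H$), would form a copy of $G_{2 \times 2}$, contradicting the square-freeness of $H$. Hence by \cref{thm:no horiz alignment}, $H_d \in \pgr$.

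Extension: I would proceed by induction on the number of non-doubled columns of $H$. The base case $H = H_d$ is handled above. For the inductive step, let $H'$ be obtained from $H$ by removing one non-doubled column $c$ and its incident edges, and assume inductively $H' \in \pgr$. Given any $N \times N$ spanning grid subgraph $G$ without a $k$-coclique, with $N$ a sufficiently large polynomial in $k$, \cref{prop: supersaturation} produces a supersaturated number of copies of $H'$ in $G$. For each copy, the valid placements of $\phi(c) \in [N]$ form an intersection of row-neighborhoods in $G$, determined by the horizontal edges of $c$ in $H$. Summing over copies of $H'$ and over candidate placements $\phi(c)$, a double-counting argument combined with Tur\'an-type bounds in each row of $G$ (which has no $k$-independent set) should show that the total number of $H$-copies is positive, giving $\gr(H, K_k) = k^{O_H(1)}$.

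The main obstacle is rigorously handling this extension when a non-doubled column $c$ has aligned horizontal edges (labels of size $2$) to several placed columns, forcing $\phi(c)$ into a small intersection of row-$y_1$ and row-$y_2$ neighborhoods. These intersections can a priori be empty for a fixed copy of $H'$, so the averaging argument must be delicate. I expect the resolution to follow the spirit of the proof of \cref{Cor: col/row Dup GridGraph} (the bridging theorem), which applies Tur\'an's theorem on top of a supersaturated count to absorb the polynomial loss per added column.
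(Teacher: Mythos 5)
Your structural observation is the same one the paper uses (two doubled columns joined by horizontal edges in both rows would close a square), and the application of \cref{thm:no horiz alignment} to $H_d$ is fine. The gap is in your extension step, and it is a real one, not a technicality to be absorbed ``in the spirit of'' \cref{Cor: col/row Dup GridGraph}. When you attach a non-doubled column $c$ to an already-embedded copy of $H'$, the admissible images of $c$ form an intersection of neighborhoods taken in \emph{different rows} (or of several neighborhoods within one row, of distinct placed columns). The bridging theorem never faces this: there the new column inherits all of its constraints from the column it duplicates, so its candidate set is a single set $P_{\varphi}$, and the one genuinely new edge lives inside $P_{\varphi}$ within a single row, where the no-$k$-coclique hypothesis plus Tur\'an's theorem supply edges. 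Tur\'an gives you no control whatsoever over the intersection of a row-$y_1$ neighborhood with a row-$y_2$ neighborhood; that intersection can be empty for every copy of $H'$, and supersaturation of $H'$ gives no lower bound on its average size. This cross-row intersection is precisely the difficulty that makes $G_{2\times 2}$ superpolynomial, so no routine averaging argument can close it.

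The paper avoids the problem by building in the opposite order. It first embeds a \emph{superstructure} $H_1$ on the $c_1$ non-doubled columns plus one extra column: all horizontal edges present in both rows among these $c_1+1$ columns, plus the vertical edge in the extra column. This $H_1$ is bridging-constructible (a horizontal $(c_1+1)$-clique followed by one row-bridging), so \cref{Cor: col/row Dup GridGraph} supersaturates it. Fixing the images of the $c_1$ columns and the two rows yields a set $S$ of at least $N/k^{a}$ columns, each carrying its vertical edge and already fully joined in both rows to the fixed columns. Only then is \cref{MEH lemma} applied \emph{inside} $S$ to realize the horizontal edges among the doubled columns (which, by your own observation, have no two edges between the same pair of columns). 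Every edge between a non-doubled and a doubled column comes for free from membership in $S$, so the problematic cross-row intersections never arise. If you want to salvage your induction, you would have to restrict to non-doubled columns whose attachment is a single horizontal edge (where bridging the target column genuinely yields a supergraph of $H$); in general you cannot add the non-doubled columns last.
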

\begin{proof}
Let $H$ be on $c$ columns, suppose they are labeled by elements of $[c]$. If $H$ has no vertical edges, then \cref{thm:no horiz alignment} suffices. Suppose otherwise.

Suppose $c_1$ columns don't have a vertical edge. By swapping columns, without loss of generality let these be columns $[1,c_1]$. We allow $c_1=0$. From a single vertex, we can repeatedly column-bridge $c_1+1$ times to construct a horizontal $(c_1+1)$-clique, so a horizontal $(c_1+1)$-clique satisfies the conditions of \cref{Cor: col/row Dup GridGraph}.

Let $H_1$ be the graph on $2$ rows and $c_1+1$ columns where every row edge exists and the edge in column $c_1+1$ exists. Then $H_1$ is formed from row-bridging of a horizontal $(c_1+1)$-clique. Thus, it also satisfies the conditions of \cref{Cor: col/row Dup GridGraph}.

We can strengthen column-bridging with \cref{MEH lemma}. In an $N\times N$ grid subgraph with no coclique of size $k$, by \cref{Cor: col/row Dup GridGraph}, there are at least $\frac{N^{c_1+3}}{k^a}$ embeddings of $H_1$ for some constant $a$ if $N>k^b$ for some constant $b$. Hence, there are columns $x_1,x_2,\dots, x_{c_1}$, and rows $y_1,y_2$ such that $\frac{N}{k^a}$ embeddings $(\phi_c,\phi_r)$ of $H_1$ into $G$ send the first $c$ columns to $x_1,x_2,\dots, x_{c_1}$ and the two rows to $y_1$ and $y_2$. These $\frac{N}{k^a}$ embeddings all send column $c_1+1$ of $H_1$ to different columns. Let this set of columns be $S$.

Let $H_2$ be the grid subgraph which is $H$ restricted to columns in $(c_1,c]$ with all column edges deleted. No edges in $H_2$ share the same two columns. Let $N>k^M$ for some large $M$ dependent on $H_2$. Then $|S|>k^{M-a}$. By \cref{MEH lemma}, in the two rows $y_1,y_2$ and among the columns in $S$, there is a copy of $H_2$. The induced grid subgraph made by the vertices of this copy as well as the vertices $(x_a,y_b)$ for $a\in [c_1],b\in [2]$ contains a copy of $H$, as desired.
\end{proof}

\section{Implications for Hypergraph Ramsey numbers}\label{sec:hypergraph}

As observed in \cite{conlon2023hypergraph}, upper bounds on grid Ramsey numbers imply corresponding upper bounds for certain off-diagonal hypergraph Ramsey numbers.

Before proceeding with this section, we will recall the standard definition of embeddings in $3$-graphs and define relevant notation.
\begin{definition}[$3$-graph embedding]
  An embedding of a $3$-graph $H_* = ([m],E_{H_*})$ into a $3$-graph $G_* = ([n],E_{G_*})$ is an injective graph homomorphism $\varphi: [m] \to [n]$. That is, for all vertices $i,j,k \in [m]$, $\{\varphi(i),\varphi(j),\varphi(k)\} \in E_{G_*}$ if $\{ i,j,k \} \in E_{H_*}.$

  Denote by $t_3(H_*,G_*)$ the number of embeddings of $H_*$ in $G_*$. Note that $t_3(H_*,G_*)=t_g(f_g(H_*),f_g(G_*))$.
\end{definition}

We begin by proving \cref{thm: vertexDup3-Graphs}, a direct analogue of \cref{Cor: col/row Dup GridGraph}, in which we see that row/column-bridging in the grid setting corresponds to an operation we call \emph{vertex-bridging}, which may be thought of as a vertex blowup boosted by the addition of an extra edge.

\begin{prop}[Vertex-bridging on general hypergraphs]\label{thm: vertexDup3-Graphs}
  Let $G = (V,E)$ be a $3$-graph on $n$ vertices such that its complement $3$-graph $\overline{G}$ does not contain $S_k^{(3)}$. Also, let $H = (V_H,E_H)$ be a $3$-graph on $m$ vertices. Finally, let $H' = (V_{H'},E_{H'})$ be a $3$-graph on $m+1$ vertices formed by taking $H$, blowing up any $v \in V_H$ to vertices $v,v'$, and taking $E_{H'} = E_H \sqcup \{ v,v',w \}$ for any chosen $w \in V_H \setminus \{v\}$. Suppose that there exists constants $a,c',c > 0$ such that if $n > ak^c$, then $t_3(H,G) > a^{-1} \cdot n^mk^{-c'}$. Then, if $n > 2ak^{c'+1}$, then $t_3(H',G) > (2a)^{-2}n^{m+1}k^{-(2c'+1)}$.
\end{prop}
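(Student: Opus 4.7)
The plan is to mirror the supersaturation argument used for \cref{Cor: col/row Dup GridGraph}, transporting it from the grid setting into the hypergraph setting. The roles of ``column/row bridging in row $y$" and ``adding edge $\{(c_*, y), (c+1, y)\}$" will be played by the hypergraph vertex blowup of $v$ into $\{v, v'\}$ together with the addition of the single edge $\{v, v', w\}$. The fact that $\overline G$ contains no $S_k^{(3)}$ will play the role of ``$G$ has no coclique of size $k$."

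First, for each injection $\varphi : V_H \setminus \{v\} \to V$ I would define the set of \emph{good extensions}
\[
P_\varphi := \{ u \in V \setminus \operatorname{Im}(\varphi) : \varphi \cup \{(v, u)\} \text{ is an embedding of } H \text{ into } G \}.
\]
By counting embeddings of $H$ two ways, the hypothesis gives $\sum_\varphi |P_\varphi| = t_3(H, G) > n^m / (a k^{c'})$. Now for each fixed $\varphi$, consider the auxiliary graph $F_{\varphi, w}$ on vertex set $P_\varphi$ in which $\{u, u'\}$ is an edge precisely when $\{u, u', \varphi(w)\} \in E(G)$. Each such edge gives an embedding of $H'$ extending $\varphi$ (by sending $v \mapsto u$ and $v' \mapsto u'$), and distinct $(\varphi, \{u, u'\})$ give distinct embeddings of $H'$.

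The crucial observation is that $F_{\varphi, w}$ cannot contain an independent set of size $k$: such an independent set $U$ would, together with the vertex $\varphi(w)$, yield a copy of $S_k^{(3)}$ in $\overline G$ centered at $\varphi(w)$, contradicting the hypothesis on $G$. Applying Tur\'an's theorem, $F_{\varphi, w}$ contains at least $f(|P_\varphi|)$ edges, where $f(x) = x(x - k + 1)/(2(k-1))$. As in the grid proof, $f$ is convex on $\R_{\geq 0}$ and $f(x) \geq x^2/(4k)$ when $x \geq 2k$.

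To conclude, I would sum $f(|P_\varphi|)$ over the $M_* := (m-1)! \binom{n}{m-1} \leq n^{m-1}$ choices of $\varphi$ and invoke Jensen's inequality:
\[
t_3(H', G) \;\geq\; \sum_\varphi f(|P_\varphi|) \;\geq\; M_* \, f\!\left( \frac{1}{M_*} \sum_\varphi |P_\varphi| \right).
\]
Under the hypothesis $n > 2a k^{c'+1}$, the average $\frac{1}{M_*} \sum_\varphi |P_\varphi| > n / (a k^{c'}) > 2k$, so the pointwise bound $f(x) \geq x^2/(4k)$ applies to the average, yielding
\[
t_3(H', G) \;\geq\; \frac{M_*}{4k}\left( \frac{n^m}{M_* \, a k^{c'}} \right)^2 \;\geq\; (2a)^{-2} \, n^{m+1} k^{-(2c'+1)},
\]
as desired. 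The only subtlety, and the one place requiring a moment of care, is verifying that the Tur\'an independence-to-$S_k^{(3)}$ step really translates cleanly: we must ensure the $k$ purported independent vertices of $F_{\varphi, w}$ lie outside $\operatorname{Im}(\varphi)$ so that together with $\varphi(w)$ they form an honest copy of $S_k^{(3)}$ in $\overline G$, which is immediate from the definition of $P_\varphi$. No other step presents an obstacle; the argument is essentially the hypergraph shadow of the grid proof, which is why \cref{Cor: col/row Dup GridGraph} can be recovered from this proposition via the bijection $f_g$.
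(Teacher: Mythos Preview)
Your proposal is correct and essentially identical to the paper's proof: you define the same extension sets $P_\varphi$, the same auxiliary link graph $F_{\varphi,w}$, use the no-$S_k^{(3)}$ hypothesis to bound independence number, apply Tur\'an with the same $f(x)=x(x-k+1)/(2(k-1))$, and finish via Jensen over the $M_*=(m-1)!\binom{n}{m-1}$ partial embeddings. The paper's writeup (which calls $M_*$ by $c_{n,m}$) matches yours step for step.
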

\begin{proof}

The proof is analogous to that of \cref{thm: Constructible by Duplication is Polynomial}.

Assume $n >2ak^{c'+1}$. For any injection $\varphi:V_H\setminus \{v\}\to V$, define $P_\varphi$ as the set of $u \in V\setminus \rm{Im}(\varphi)$ such that the injection $\varphi\cup\{(v,u)\}$ is an embedding of $H$. Thus, by assumption,
\[\sum_{\varphi} |P_{\varphi}| =t_3(H,G)> \frac{n^m}{ak^{c'}}.\]
Now, note that for any given $\varphi$ and a pair $u,u' \in P_\varphi$, an embedding of $H'$ is formed if we have the edge $\{u,u',\varphi(w)\}\in E$. Let $F_{\varphi,w}$ be the link $\varphi(w)$ induced on $P_\varphi$, so for $u,u'\in P_\varphi$, we have $\{u,u'\}\in F_{\varphi,w}$ if and only if $\{ u,u',\varphi(w)\} \in E$. Then, each edge in $F_{\varphi,w}$ forms an embedding of $H'$. Note that $F_{\varphi,w}$ must not contain an independent set $U$ of size $k$, or else $\overline{G}[U\cup\{w\}]$ contains $S_k^{(3)}$. Thus, for any given $\varphi$, Tur\'an's theorem guarantees that there exists at least $f(|P_\varphi|)$ embeddings of $H'$ in $G$ using all vertices of $S$, where $f(x) = \frac{x(x-k+1)}{2(k-1)}$. Note that $f(x)$ is convex and $f(x) \geq x^2/(4k)$ for $x \geq 2k$. Since there are $c_{n,m}:=\binom{n}{m-1}(m-1)! \leq n^{m-1}$ choices of $\varphi$, the average cardinality of $P_\varphi$ is greater than $a^{-1} \cdot n k^{-c'}>2k$. Therefore, by Jensen's inequality,
\[t_3(H',G) \geq \sum_S f(|P_S|) \ge
c_{n,m}f\Big(\frac{1}{c_{n,m}}\sum_{\varphi}|P_\varphi|\Big)\geq
\frac{c_{n,m}}{4k}\Big(\frac{n^m}{c_{n,m}ak^{c'}}\Big)^2
\geq n^{m+1}(2a)^{-2}k^{-2c'-1},
\]
achieving our desired result.
\end{proof}

A similar result to \cref{thm: Constructible by Duplication is Polynomial} can be proved in the Hypergraph setting via iteration of vertex-bridging.

\begin{prop}\label{thm: Hypergraph Constructible by Duplication is Polynomial}
If $H$ can be obtained by finite iteration of vertex-bridging starting from the graph on one vertex, $r(H,S_k^{(3)}) = k^{O(2^m)}$.
\end{prop}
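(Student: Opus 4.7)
The plan is to run an induction that mirrors the grid argument of \cref{thm: Constructible by Duplication is Polynomial}, using the hypergraph bridging result \cref{thm: vertexDup3-Graphs} in place of \cref{Cor: col/row Dup GridGraph}. Encode the construction of $H$ as a sequence $H_0, H_1, \ldots, H_{m-1} = H$, where $H_0$ is a single vertex and $H_{i+1}$ is obtained from $H_i$ by exactly one vertex-bridging step, so that $H_i$ has $i+1$ vertices. I would prove by induction on $i$ the following supersaturation statement: there exist constants $a_i, c_i, c'_i$ depending only on $i$ (and implicitly on the chosen sequence of bridgings) such that for every $3$-graph $G$ on $n > a_i k^{c_i}$ vertices whose complement $\overline G$ contains no $S_k^{(3)}$, one has $t_3(H_i, G) > a_i^{-1} n^{i+1} k^{-c'_i}$.

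The base case $i = 0$ is routine: $t_3(H_0, G) = n$ for any $G$, so the density bound holds trivially once $n$ exceeds a small constant, for instance with $(a_0, c_0, c'_0) = (2, 0, 0)$. For the inductive step, \cref{thm: vertexDup3-Graphs} is exactly the statement that if the supersaturation hypothesis holds for $H_i$ with parameters $(a_i, c_i, c'_i)$, then it holds for $H_{i+1}$ with parameters satisfying $c_{i+1} = c'_i + 1$ and $c'_{i+1} = 2c'_i + 1$; the new threshold/density constant $a_{i+1}$ can be taken as a polynomial function of $a_i$, chosen large enough to dominate both $2a_i$ (the threshold coefficient) and $(2a_i)^2$ (the reciprocal of the density coefficient). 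The choice of the blown-up vertex $v \in V(H_i)$ and its bridge partner $w$ at each step are fixed by the given bridging sequence; these get absorbed into the constants but do not affect the exponents.

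The last step is bookkeeping the recurrences. With $c'_0 = 0$, the relation $c'_{i+1} = 2c'_i + 1$ yields $c'_i = 2^i - 1$, and hence $c_i = 2^{i-1}$ for $i \geq 1$. Since $a_i$ depends only on $H$, specializing to $i = m - 1$ gives
\[
r(H, S_k^{(3)}) \leq a_{m-1} k^{c_{m-1}} = O_H\!\left(k^{2^{m-2}}\right) = k^{O(2^m)},
\]
as claimed. I expect no real obstacle in executing this plan: the analytic heavy lifting is entirely encapsulated in \cref{thm: vertexDup3-Graphs}, and what remains is straightforward exponent bookkeeping together with the observation that the definition of ``obtained by finite iteration of vertex-bridging starting from one vertex'' is exactly the statement that such a sequence $H_0, \ldots, H_{m-1}$ exists. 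The only mild subtlety is that the hypothesis of \cref{thm: vertexDup3-Graphs} uses a single constant $a$ both as the threshold coefficient and as the inverse density coefficient, so one must consolidate the two appearances into a single $a_{i+1}$ at each step; this costs nothing in the exponents and is absorbed into the $O(2^m)$.
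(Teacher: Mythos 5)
Your proposal is correct and takes essentially the same approach as the paper: the paper's proof of this proposition simply states that it is identical to the induction for \cref{thm: Constructible by Duplication is Polynomial} with $r+c$ replaced by $m$, which is exactly the induction you carry out via \cref{thm: vertexDup3-Graphs}. Your exponent bookkeeping ($c'_i = 2^i - 1$, $c_i = 2^{i-1}$) reproduces the paper's recursion and lands within the claimed $k^{O(2^m)}$.
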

\begin{proof}
The proof is identical to the proof in \cref{thm: Constructible by Duplication is Polynomial}, with $r+c$ replaced by $m$.
\end{proof}

We also obtain a cubic bound on $r(C_6^{(3)}, S_k^{(3)})$.
This follows from \cref{Thm: Bounds on Ell} when we
convert into the grid setting.

Now we will present the proof of \cref{cor:tight cycle theorem}. 

\begin{proof}[Proof of \cref{cor:tight cycle theorem}]
We first prove for the case where $t$ is even. Let $t=2d$.

Note that $C^{(3)}_{t}$ is a 3-graph with property B. Let the vertices of the tight cycle be $x_1,y_2,\ldots, x_d,y_d$ in order. Now we apply $f_g$. Note that $f_g (x_{i},y_{i},x_{i+1}$) = $\{(x_i,y_i),(x_{i+1},y_i)\}$ and $f_g (y_{i},x_{i+1},y_{i+1}$)  = $\{(x_{i+1},y_i),(x_{i+1},y_{i+1})\}$. This is an alternating cycle, which we denoted as $\AC_t$. From \cref{Thm:Alternating cycles are polynomial} we get the desired result since $R(C_t^{(3)},S_k^{(3)}) \leq 2\gr(\AC_t , K_k) = k^{O(1)} $ when $ t>4 $.

Now we proceed to the case where t is odd, $t=2d+1$. 

Here we use $f_g$ as in the previous case and end up with a grid graph on columns $x_1,...,x_{2d+1}$ and rows $y_1, ..., y_d$. Now our edges would be :

$\{(x_i,y_i),(x_{i+1},y_i)\}$ for $i \in [d]$

$\{(x_{i+1},y_i),(x_{i+1},y_{i+1})\}$ for $i \in [d-1]$

$\{(x_1,y_1),(x_{d+1},y_1)\}$ and $\{(x_1,y_d),(x_{d+1},y_d)\}$
We denote this graph as $AS_{d-1}$ ($AS$ means Aligned Staircase and $AS_4$ can be found in \cref{fig:corresponding to odd tight cycle}). Note that $R(C_t^{(3)},S_k^{(3)}) \leq 2gr(AS_{d-1} , K_k) $ when $ t\geq 5 $. Therefore, it is sufficient to prove that $gr(AS_{d-1} , K_k) = O(k^{O(1)})$ for $d\geq2$. 
From \cref{thm: Constructible by Duplication is Polynomial}, it suffices to show that we can obtain $AS_{d-1}$ via row/column-bridging starting from a row-clique of order $(d+1)$. This is because a row clique is a generalized subdivision of an edge, and hence bridging-constructible from a vertex.

Let's label our starting row clique as $(x_1,y_1),(x_2,y_1),(x_3,y_1), \dots (x_{d+1},y_1) $. We would label this graph $\chi_{0}$ On the first move we bridge row $y_1$ and make $y_2$ and add the edge $\{(x_2,y_1),(x_2,y_2)\}$. We call this graph $\chi_{1}$.  On the $i^{th} $ move ,for $i \in [d-1]$, we bridge row $y_i$ of $\chi_{i-1}$ and make the new row $y_{i+1}$ and add the edge $\{(x_{i+1},y_i),(x_{i+1},y_{i+1})\}$ which gives us the graph $\chi_{i}$. 

The graph $\chi_{d-1}$ we create after these $d-1$ moves contain $AS_{d-1}$ as a subgraph. Therefore $gr(AS_{d-1},K_k) \leq gr(\chi_{d-1},K_k)=k^{O(1)}$.  

\end{proof}

\remark{In general we have for all $H\subseteq K_{n,n}^{(3)}$, $R(H,S_k^{(3)})\leq 2\gr(f_g(H),k)$ , since $f_g^{-1}(K_k) = S_k^{(3)}$ and $f_g(H) \subseteq G_{n \times n}$, while the number of vertices in $H$ is atmost $2n$.  }

\begin{figure}[H]
 \centering
\begin{tikzpicture}[scale=0.9, mystyle/.style={circle, draw, fill=black, inner sep=3pt}]
 \node[mystyle] (A) at (0,4) {};
 \node[mystyle] (B) at (1,4) {};
 \node[mystyle] (C) at (1,3) {};
 \node[mystyle] (D) at (2,3) {};
 \node[mystyle] (E) at (2,2) {};
 \node[mystyle] (F) at (3,2) {};
 \node[mystyle] (G) at (3,1) {};
 \node[mystyle] (H) at (4,1) {};
 \node[mystyle] (I) at (4,0) {};
 \node[mystyle] (J) at (5,0) {};
 \node[mystyle] (K) at (0,0) {};
 \node[mystyle] (L) at (5,4) {};

 \draw[black,thick] (A) -- (B);
 \draw[black,thick] (C) -- (B);
 \draw[black,thick] (C) -- (D);
 \draw[black,thick] (E) -- (D);
 \draw[black,thick] (E) -- (F);
 \draw[black,thick] (G) -- (F);
 \draw[black,thick] (G) -- (H);
 \draw[black,thick] (I) -- (H);
 \draw[black,thick] (I) -- (J);
 \draw[black,thick] (0,0) .. controls (2.5,-1) .. (5,0);
 \draw[black,thick] (0,4) .. controls (2.5,5) .. (5,4);
 \draw[black,thick,dotted] (L) -- (J);
 \draw[black,thick,dotted] (A) -- (K);

\end{tikzpicture}
\caption{$AS_4$}
\label{fig:corresponding to odd tight cycle}
\end{figure}

\section{Further Directions}\label{sec: further directions}

We conclude with three open questions regarding grid Ramsey numbers.

\begin{question}[N-Z stool]
Consider the grid subgraph $H$ on $4$ columns and $2$ rows drawn below, which we call the N-Z stool (imagine drawing the graph in 3D with each row in a 2D plane).

Is $\gr(H,K_k)$ polynomial in $k$?
\begin{figure}[h]
  \centering
  \begin{tikzpicture}[scale=0.6, mystyle/.style={circle, draw, fill=black, inner sep=3pt}]
  \node[mystyle] (A) at (1,1) {};
  \node[mystyle] (B) at (2,1) {};
  \node[mystyle] (C) at (3,1) {};
  \node[mystyle] (D) at (4,1) {};
  \node[mystyle] (E) at (1,2) {};
  \node[mystyle] (F) at (2,2) {};
  \node[mystyle] (G) at (3,2) {};
  \node[mystyle] (H) at (4,2) {};

  \draw[red,thick] (A) -- (B);
  \draw[red,thick] (B) -- (C);
  \draw[red,thick] (C) -- (D);
  \draw[red,thick] (G) to[out=140,in=40] (E);
  \draw[red,thick] (E) to[out=50, in=130] (H);
  \draw[red,thick] (H) to[out=140, in=40] (F);
  \draw[red, thick] (A) -- (E);
  \draw[red, thick] (B) -- (F);
  \draw[red, thick] (C) -- (G);
  \draw[red, thick] (D) -- (H);

  \end{tikzpicture}
\end{figure}
\end{question}
By \cref{thm:no horiz alignment}, this is true conditional on Multicolor Erd\H{o}s-Hajnal. However, we can't obtain this grid subgraph through row/column-bridging starting from a single edge: consider a grid subgraph $H_1$ which does not have $H$ as a sub-pattern but does have $H$ as a sub-pattern after a bridging step. Then it could not have bridged a row, since there are $4$ edges between the two rows of $H$. Hence, it bridged a column, and it can be checked that there must have been a square in $H_1$.


\begin{question}
Consider the grid subgraph $H$ drawn below.
Is $\gr(H,K_k)$ polynomial?
\begin{figure}[h]
  \centering
  \begin{tikzpicture}[scale=0.6, mystyle/.style={circle, draw, fill=black, inner sep=3pt}]
  \node[mystyle] (A) at (1,1) {};
  \node[mystyle] (B) at (2,1) {};
  \node[mystyle] (C) at (3,1) {};
  \node[mystyle] (D) at (1,2) {};
  \node[mystyle] (E) at (2,2) {};
  \node[mystyle] (F) at (3,2) {};
  \node[mystyle] (G) at (1,3) {};
  \node[mystyle] (H) at (2,3) {};

  \draw[red,thick] (D) -- (G) -- (H) -- (E) -- (F) -- (C) -- (B) -- (A);

  \end{tikzpicture}
\end{figure}
\end{question}
Neither row/column-bridging nor our results conditional on the Multicolor Erd\H{o}s-Hajnal conjecture work for this graph. Changing $H$ by deleting any edge makes $\gr(H,K_k)$ polynomial through bridging.

\begin{question}
  Is it true that $\gr(H, K_k)$ is polynomial for any $H$ which does not contain a copy of $G_{2 \times 2}$?
\end{question}
\cref{thm: two row grids} gives some weak evidence towards the answer being yes, but we do not have a strong belief either way.


\vspace{3mm}

{\noindent \textbf{Acknowledgments.}} The authors are grateful to Dhruv Mubayi for suggesting this problem, to Jacob Fox for stimulating conversations about the multicolor Erd\H{o}s-Hajnal conjecture, and to Ruben Ascoli and Logan Post for many helpful comments on this manuscript. We are also grateful to the Georgia Tech Research Experiences for Undergraduates program, where most of this research was conducted.


\bibliographystyle{abbrvnat}   
\bibliography{biblio}          

\vspace{0.5cm}

\end{document}